\newcommand\m[1]{\vcenter{\hbox{$\scriptstyle #1$}}}
\theoremstyle{plain}
\newtheorem*{thm*}{Theorem}
\theoremstyle{plain}
\newtheorem{thm}{Theorem}[section]
\theoremstyle{definition}
\newtheorem{defn}[thm]{Definition}
\theoremstyle{plain}
\newtheorem{lem}[thm]{Lemma}
\theoremstyle{plain}
\newtheorem{prop}[thm]{Proposition}
\theoremstyle{plain}
\newtheorem{cor}[thm]{Corollary}
\theoremstyle{remark}
\theoremstyle{remark}
\newtheorem*{acknowledgement*}{Acknowledgement}
\title{Isometric immersions of Riemannian manifolds in $k$-codimensional Euclidean space}
\author{Dan Gregorian Fodor}
\date{}
\begin{document}
\maketitle

\pagenumbering{roman}

\pagestyle{myheadings}\markboth{}{}

\pagenumbering{arabic}
\pagestyle{myheadings}
\begin{abstract}
We use a new method to give conditions for the existence of a local isometric immersion of a Riemannian $n$-manifold $M$ in $\mathbb{R}^{n+k}$, for a given $n$ and $k$. These equate to the (local) existence of a $k$-tuple of scalar fields on the manifold, satisfying a certain non-linear equation involving the Riemannian curvature tensor of $M$. Setting $k=1$, we proceed to recover the fundamental theorem of hypersurfaces.
In the case of manifolds of positive sectional curvature and $n\geq 3$, we reduce the solvability of the Gauss and Codazzi equations to the cancelation of a set of obstructions involving the logarithm of the Riemann curvature operator. The resulting theorem has a structural similarity to the Weyl-Schouten theorem, suggesting a parallelism between conformally flat $n$-manifolds and those that admit an isometric immersion in $\mathbb{R}^{n+1}$.
\end{abstract}

\begin{description}[leftmargin=8em]
\item [2010 Mathematics Subject Classification] 53A07.

\item[Keywords and phrases] Isometric immersions; Curvature equation; Weyl-Schouten-type theorem.
\end{description}

\section*{Introduction}
The problem of immersions of manifolds into Euclidean space is well studied. The Stiefel-Whitney classes \cite{Hatcher}, Whitney and Nash embedding theorems \cite{HH}, and the Cartan-Janet theorem \cite{HH} give us lower bounds on the number of dimensions required to embed various classes of manifolds into Euclidean space.  Here we look into the inverse problem: if we fix the dimension $m$ of the Euclidean ambient space in which a (local) isometric embedding exists, what can that tell us about the metric of our manifold? This acts as a sort of measure for the complexity of the metric, with the simplest metric the lowest number of dimensions: an $n$-manifold admits an isometric immersion in $\mathbb{R}^{n}$ if and only if its metric is flat.

In section $2$ we rewrite the (local) immersion of an $n$-manifold $(M,g)$ in $\mathbb{R}^{n+k}$ as a section of the $\mathbb{R}^{k}$ bundle over a flat $n$-manifold $(M,f)$. Thus, the existence of a local immersion becomes equivalent to the existence of a $k$-tuple of scalar fields $h\m{\tau}$, ${\tau}\in \{1..k\}$ such that $f_{ab}= g_{ab}-h\m{\tau}_{;a}h\m{\tau}_{;b}$ is a positive-definite flat metric.

The flatness of $f$ is characterised by the vanishing of its curvature. We next apply formulas connecting the curvatures of $2$ metrics, from \cite[Theorem 4.1]{Fodor}, to explicitly write the curvature of $f$ in terms of $g's$ existing connection and curvature.
Thus we rewrite the existence of a local immersion in $\mathbb{R}^{n+k}$ as the existence of a $k$-tuple of scalars $h\m{\tau}$, satisfying a certain nonlinear equation involving $g's$ connection and curvature:
$$(h\m{\alpha}_{;pj}h\m{\beta}_{;ik}-h\m{\alpha}_{;pk}h\m{\beta}_{;ij})( \delta\m{\alpha\beta}-h\m{\alpha}_{;a}h\m{\beta}_{;b}g^{ab})^{-1}=R_{pijk},\label{eq q}$$
with $f$ positive definite (Theorem 2.3). This is derived by setting the formula for $f's$ curvature to $0$.

In section 3 we  study the previous equation for $k$=1, obtaining:
$$\frac{h_{;pj}h_{;ik}-h_{;pk}h_{;ij}}{1-g^{ab}h_{;a}h_{;b}} = R_{pijk},$$
with positive definitness of $f$ becoming $g^{ab}h_{;a}h_{;b} < 1$. Thus we characterize the local existence of an immersion of $(M,g)$ in $\mathbb{R}^{n+1}$ as the existence of a scalar field $h$ satisfying the above conditions. This $h$ is basically a \emph{height} function lifting the flat manifold $(M,f)$ into $\mathbb{R}^{n+1}$. We show the equivalence of the above criteria to the Gauss and Codazzi equations, setting  $\Pi_{ab}= {h_{;ab} }/{(1-g^{ab}h_{;a}h_{;b})^{1/2}}$. The Codazzi equation $\Pi_{a[b;c]} = 0$ becomes an integrability condition allowing us to recover $h$ form  $\Pi_{ab}$ satisfying the Gauss equation and a set of initial conditions. We  then recover the fundamental theorem of hypersurfaces (Theorem 3.2).

In section 4 we rewrite the Gauss and Codazzi equations in a new form for manifolds of positive curvature. Considering ${\mathbb{R}_{ab}}^{cd}$ as an operator on the space of $2$-forms, we show there exists $\Pi_{ab}$ satisfying $$\Pi_{ac}\Pi_{bd}-\Pi_{ad}\Pi_{bc}=R_{abcd}$$ if and only if the Weil component of $R^{*}_{abcd}$ vanishes, with ${\mathbb{R}^{*}_{ab}}^{cd} = ln({\mathbb{R}_{ab}}^{cd})$ (the logarithm of the curvature operator). Due to positive curvature, $\Pi_{ab}$ turns out to be uniquely defined as
$\Pi_{ab}= \pm e^{P^{*}_{ab}}$, where $P^{*}_{ab}$ is the Schouten component of  ${\mathbb{R}^{*}_{ab}}^{cd}$ (Theorem 4.4).
We strengthen these results and rewrite the fundamental theorem of hypersurfaces (which characterises immersability in an Euclidian subspace of codimension $1$) in a form analogous to the Weyl-Schouten theorem (which characterises conformal  flatness). This is done in section 5 as Theorem 5.1.

In section 6 we study submanifolds of $(M,g)$ immersed in $\mathbb{R}^{n+1}$ obtained by intersecting the immersion of $M$ with an $n$-plane. These are just the areas of constant $h$ for particular $h$. We obtain a formula for the Riemannian curvature of these manifolds and find it equal to the curvature of the ambient manifold multiplied by a scaling factor greater than or equal $1$. Thus, if we study the cross-sections of the immersion (in $\mathbb{R}^{n+1}$) of an $n$-sphere with hyperplanes, they will be $(n-1)$-spheres with greater (or equal) positive curvature. If we immerse manifolds of  negative curvature, the cross-sections will be manifolds with greater or equal negative curvature. The cross-sections of immersions of flat manifolds will be flat manifolds.

\section{Notational conventions}
Since we use $m$-tuples of scalars, (or sections of an $\mathbb{R}^{m}$ bundle over the $n$-manifold), we need a way to index their elements. In addition to lower indices $V_a$, signifying covariant tensors, and upper indices $V^a$, signifying contravariant tensors, we shall also use middle indices $V\m{\alpha}$ to signify $m$-tuples, or sections of the $\mathbb{R}^{m}$ bundle. That is, $V\m{\alpha} = (V\m{1},V\m{2},...V\m{m})$. This gives us two additional rules: just as lower indices contract only with upper indices and vice-versa, so do middle indices contract only with middle indices, contraction signifying the ordinary cartesian product over $m$-vectors in $\mathbb{R}^{m}$. Eg. $V\m{\eta}K\m{\eta} = \sum_{\eta=1}^{m}V\m{\eta}K\m{\eta}$. Additionally, if a tensor has only middle indices, then its covariant derivative coincides with its ordinary derivative (this is a generalisation of the same rule for scalars, as such tensors can be seen as groupings of scalar-fields).

\section{Conditions for local immersions}
If we have an immersion $h:M\rightarrow \mathbb{R}^m$, of an $n$-manifold $M$ into $\mathbb{R}^{m}$, we can consider it to induce a metric on the manifold, namely the pull-back of the Euclidean metric along $h^{*}$. Associating to every point of the manifold the coordinate field $h\m{\tau}$, with $\tau \in\{1,2,...m\}$, the induced metric is $g_{ab}= h\m{\tau}_{,a}h\m{\tau}_{,b}$. Therefore, we have the following:

\begin{defn}
A Riemannian $n$-manifold $M$ admits a (local) isometric immersion in $\mathbb{R}^{m}$ if  there exists (locally) on the manifold an $m$-tuple of scalar fields, $h\m{\tau}$, satisfying $h\m{\tau}_{,a}h\m{\tau}_{,b} = g_{ab}$.
\end{defn}

The above formulation does not tell us very much about the relationships between the scalar fields and the intrinsic invariants of $M$. On the other hand, we know that an $n$-manifold admits a local immersion into $\mathbb{R}^{n}$ if and only if its curvature tensor vanishes. We would like a theorem that appears as a generalization of this particular case. The remainder of this section will be devoted to proving such a theorem.

\begin{lem} \label {1} %Lemma 2.2
A Riemannian $n$-manifold $(M,g)$ admits a local isometric immersion in  $\mathbb{R}^{n+k}$ if and only if there exists locally a $k$-tuple of scalars $h\m{\tau}$, such that $f_{ab}= g_{ab}-h\m{\tau}_{;a}h\m{\tau}_{;b}$ is a flat Riemannian metric.
\end{lem}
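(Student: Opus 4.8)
The plan is to prove both directions of the equivalence by exhibiting, in each case, an explicit relationship between an isometric immersion of $(M,g)$ into $\mathbb{R}^{n+k}$ and a $k$-tuple of scalars making $f_{ab}=g_{ab}-h\m{\tau}_{;a}h\m{\tau}_{;b}$ flat. The key observation is that $\mathbb{R}^{n+k}$ splits (locally) as $\mathbb{R}^{n}\times\mathbb{R}^{k}$, so an immersion $F:M\to\mathbb{R}^{n+k}$ can be written in components as $F=(x\m{i},h\m{\tau})$ with $i\in\{1..n\}$ and $\tau\in\{1..k\}$, where the $x\m{i}$ play the role of "horizontal" coordinates and the $h\m{\tau}$ of "vertical" ones. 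By Definition 2.1 the immersion condition is $g_{ab}=F\m{A}_{,a}F\m{A}_{,b}=x\m{i}_{,a}x\m{i}_{,b}+h\m{\tau}_{,a}h\m{\tau}_{,b}$, so $x\m{i}_{,a}x\m{i}_{,b}=g_{ab}-h\m{\tau}_{,a}h\m{\tau}_{,b}$. Since for tuples of scalars covariant derivative equals ordinary derivative, the right-hand side is exactly $f_{ab}$; and the left-hand side exhibits $f$ as the pull-back of the Euclidean metric on $\mathbb{R}^{n}$ along the map $x:M\to\mathbb{R}^{n}$, hence $f$ is automatically flat wherever it is a metric (i.e. positive-definite). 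This gives the forward direction.

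For the converse, suppose a $k$-tuple $h\m{\tau}$ is given with $f_{ab}=g_{ab}-h\m{\tau}_{;a}h\m{\tau}_{;b}$ a flat Riemannian metric. Flatness means that locally there exist coordinates, equivalently $n$ scalar fields $x\m{i}$, with $f_{ab}=x\m{i}_{,a}x\m{i}_{,b}$ (the standard fact that a flat metric is locally the pull-back of the Euclidean metric — obtained by solving $\nabla df=0$ type equations, or simply taking flat normal coordinates). Then setting $F=(x\m{1},\dots,x\m{n},h\m{1},\dots,h\m{k}):M\to\mathbb{R}^{n+k}$ we compute $F\m{A}_{,a}F\m{A}_{,b}=x\m{i}_{,a}x\m{i}_{,b}+h\m{\tau}_{,a}h\m{\tau}_{,b}=f_{ab}+h\m{\tau}_{;a}h\m{\tau}_{;b}=g_{ab}$, so $F$ is an isometric immersion in the sense of Definition 2.1. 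One should also check $F$ is an immersion (injective differential), which follows because its pull-back metric $g$ is positive-definite and hence nondegenerate, forcing $dF$ to be injective.

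The main obstacle — really the only nontrivial point — is the step in the converse asserting that a flat metric $f$ is \emph{locally} a pull-back of the flat Euclidean metric, i.e. that flatness of the curvature tensor implies the local existence of the coordinates $x\m{i}$. This is the classical fact that vanishing Riemann curvature is equivalent to the existence of a local parallel frame, which integrates (via the Frobenius/Poincaré lemma, since the connection is flat) to a coordinate system in which $f$ has constant components, and after a linear change to the identity matrix; positive-definiteness of $f$ is what lets us normalize to $\mathbb{R}^{n}$ with its standard metric rather than a pseudo-Euclidean one. I would either cite this standard result or sketch it briefly. Everything else is the bookkeeping identity $g=f+\sum_\tau dh\m{\tau}\otimes dh\m{\tau}$ together with the remark (from Section 1) that middle-indexed tensors have covariant derivative equal to ordinary derivative, which is what makes $h\m{\tau}_{;a}=h\m{\tau}_{,a}$ and keeps the computation consistent.
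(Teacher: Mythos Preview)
Your argument is essentially correct and follows the same idea as the paper's proof, but there is one point in the forward direction that you have glossed over and that the paper handles explicitly. When you write ``an immersion $F:M\to\mathbb{R}^{n+k}$ can be written in components as $F=(x\m{i},h\m{\tau})$'', you have not said \emph{how} the splitting $\mathbb{R}^{n+k}=\mathbb{R}^{n}\times\mathbb{R}^{k}$ is chosen. For a generic choice the projection $x:M\to\mathbb{R}^{n}$ need not be an immersion at all (the image of $F$ could be tangent to the $\mathbb{R}^{k}$ factor at some point), and then $f_{ab}=x\m{i}_{,a}x\m{i}_{,b}$ is only positive semi-definite, not a Riemannian metric. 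Your parenthetical ``wherever it is a metric'' acknowledges the issue but does not resolve it: you still owe an argument that at least one splitting makes $f$ positive-definite near a given point.

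The paper supplies exactly this step: it fixes a point $q$, takes the $\mathbb{R}^{n}$ factor to be the tangent plane to the immersed manifold at $i(q)$ and the $\mathbb{R}^{k}$ factor to be the normal space there. With this choice the differential of the tangential projection $p_{T}$ at $q$ is the identity, so by the inverse function theorem $p_{T}$ is a local diffeomorphism on some neighborhood $M_{q}$, and the pulled-back flat metric $f$ is genuinely positive-definite there. Once you add this choice of splitting and the one-line inverse-function-theorem justification, your proof coincides with the paper's; the converse direction you wrote is already the same as the paper's.
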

\begin{proof}
Assume there exists locally a $k$-tuple $h\m{\tau}$ such that  $f_{ab}=g_{ab}-h\m{\tau}_{;a}h\m{\tau}_{;b}$ is a flat metric. We can find a faithful (isometric) coordinate chart $m$ from $(M,f_{ab})$ to an open subset $M_f$ of $\mathbb{R}^{n}$. Consider the $\mathbb{R}^{k}$-bundle over $M_f$, and let $h\m{\tau}$ be a section of that bundle. The metric associated to $(m(p),h\m{\tau}(p))$, taken as a subset of $M_f\times \mathbb{R}^{k}$, is $f_{ab}+h\m{\tau}_{,a}h\m{\tau}_{,b} = f_{ab}+h\m{\tau}_{;a}h\m{\tau}_{;b} = g_{ab}$. However, since $M_f$ is a subset of $\mathbb{R}^n$, $M_f\times \mathbb{R}^{k}$ can be taken as a subset of $\mathbb{R}^{n}\times \mathbb{R}^{k}=\mathbb{R}^{n+k}$.  In conclusion, $(m,h\m{\tau})$ is a local, isometric immersion of $(M,g)$ in $\mathbb{R}^{n+k}$.

Conversely, assume a local isometric immersion $i:M_{i}\rightarrow \mathbb{R}^{n+k}$ exists, with $M_{i}\subset M$. Select a point $q\in M_{i}$ and choose normal coordinates at $q$. Let $p_{T}:M_{i}\rightarrow (T_{q}\cong \mathbb{R}^{n})$ be defined such that $p_T(r)$ is the orthogonal projection of  $i(r)$ onto the tangent bundle of $i(M_{i})$ at $i(q)$. Also, define $p_{N}:M_{i}\rightarrow (N_{q}\cong \mathbb{R}^{k})$, such that $p_N(r)$ is the orthogonal projection of $i(r)$ onto the normal bundle of $i(M_{i})$ at $i(q)$.

The identification of the tangent bundle with $\mathbb{R}^{n}$ and normal bundle with $\mathbb{R}^{k}$ can be made using a standard choice of normal coordinates. Since the differential of $p_{T}$ at $q$ is the linear isomorphism $\delta^{b}_a$, due to the inverse function theorem, there exists an open set $M_{q}\subset M_{i}$ on which $p_{T}$ is a bijection. The pullback metric induced by $p_{T}$ on $M_{q}$ is the metric $f_{ab}$ of $T_{q}$, which is flat.

By setting $h\m{\tau}(r) = p_{N}(r)$  for $r\in M_{q}$, the metric obeys the required equation: $g_{ab}=f_{ab}+h\m{\tau}_{;a}h\m{\tau}_{;b}$ for a flat $f_{ab}$, thus completing the proof.
\end{proof}

\subsection{Equation for local immersions}

\begin{thm}
A Riemannian $n$-manifold $(M,g)$ admits a local isometric immersion in  $\mathbb{R}^{n+k}$ if and only if there exists locally a $k$-tuple of scalars $h\m{\alpha}$, satisfying the following equation:
\begin{equation}(h\m{m}_{;pj}h\m{n}_{;ik}-h\m{m}_{;pk}h\m{n}_{;ij})(( \delta\m{mn}-h\m{m}_{;a}h\m{n}_{;b}g^{ab})^{-1})=R_{pijk},\label{eq q}\end{equation}
with $f_{ab}= g_{ab}-h\m{k}_{;a}h\m{k}_{;b}$ positive definite.
\end{thm}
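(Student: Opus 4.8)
The plan is to reduce Theorem 2.3 to Lemma 2.2 by showing that, given a $k$-tuple $h\m{\tau}$ with $f_{ab}=g_{ab}-h\m{\tau}_{;a}h\m{\tau}_{;b}$ positive definite, the flatness of $f$ is \emph{equivalent} to equation \eqref{eq q}. Since Lemma 2.2 already characterizes local immersability in $\mathbb{R}^{n+k}$ by the existence of such an $h\m{\tau}$ making $f$ a flat metric, it suffices to prove: \emph{for a positive-definite $f_{ab}=g_{ab}-h\m{\tau}_{;a}h\m{\tau}_{;b}$, the Riemann tensor of $f$ vanishes if and only if \eqref{eq q} holds.} The positive-definiteness hypothesis appears verbatim in both statements, so that part carries over with no work.

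First I would invoke the curvature-comparison formula from \cite[Theorem 4.1]{Fodor}, which expresses the curvature of a metric $f$ conformal-like/perturbed from $g$ — here $f_{ab}=g_{ab}-h\m{\tau}_{;a}h\m{\tau}_{;b}$ — in terms of $g$'s Levi-Civita connection and curvature together with the covariant derivatives $h\m{\tau}_{;a}$, $h\m{\tau}_{;ab}$. The key algebraic point is that $f$ differs from $g$ by a sum of $k$ rank-one terms $-h\m{\tau}_{;a}h\m{\tau}_{;b}$, and differentiating such a structure produces exactly the second covariant derivatives $h\m{\tau}_{;ab}$ (note that for a scalar $h\m{\tau}_{;ab}=h\m{\tau}_{;ba}$, which is what lets the formula close up). Substituting into the cited formula and collecting terms, the Riemann tensor of $f$ should come out as
\[
R^{f}_{pijk} = R_{pijk} - (h\m{m}_{;pj}h\m{n}_{;ik}-h\m{m}_{;pk}h\m{n}_{;ij})\bigl((\delta\m{mn}-h\m{m}_{;a}h\m{n}_{;b}g^{ab})^{-1}\bigr),
\]
where the inverse is of the $k\times k$ matrix indexed by the middle indices $m,n$ (invertible precisely because $f$ is positive definite: $\delta\m{mn}-h\m{m}_{;a}h\m{n}_{;b}g^{ab}$ is the Gram-type matrix whose nondegeneracy is equivalent to $\det f\neq 0$). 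Setting $R^{f}=0$ is then literally \eqref{eq q}, and unwinding the equivalence through Lemma 2.2 finishes the proof.

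The main obstacle I expect is the bookkeeping in the curvature-comparison step: \cite[Theorem 4.1]{Fodor} is presumably stated for a general relationship between two metrics, so I must specialize it correctly to the rank-$k$ perturbation by gradients and verify that all the connection-difference (Christoffel) terms and the first-derivative terms either cancel or reassemble into the compact matrix-inverse expression above. In particular I need to check that the $k\times k$ matrix $\delta\m{mn}-h\m{m}_{;a}h\m{n}_{;b}g^{ab}$ is the right object — this is the matrix of inner products (in the $g$-metric) appearing when one inverts $f_{ab}=g_{ab}-h\m{m}_{;a}h\m{n}_{;b}\delta\m{mn}$ via a Sherman–Morrison/Woodbury identity, so $f^{ab}=g^{ab}+g^{ac}h\m{m}_{;c}\bigl((\delta\m{mn}-h\m{m}_{;e}h\m{n}_{;f}g^{ef})^{-1}\bigr)h\m{n}_{;d}g^{db}$, and its positivity/invertibility is exactly equivalent to $f$ being positive definite. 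Once that identification is pinned down, the remaining computation is routine index manipulation, and the theorem follows immediately by combining the displayed curvature formula with Lemma 2.2.
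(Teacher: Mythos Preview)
Your proposal is correct and follows essentially the same route as the paper: reduce to Lemma~2.2, compute the curvature of $f$ via the difference-tensor formula from \cite{Fodor}, and recognize the $k\times k$ block $(\delta\m{mn}-h\m{m}_{;a}h\m{n}_{;b}g^{ab})^{-1}$ via a Woodbury-type identity (which the paper verifies by hand). The one step in the ``routine index manipulation'' worth flagging is that third covariant derivatives $h\m{\tau}_{;ikj}-h\m{\tau}_{;ijk}$ appear and must be converted to curvature terms via the Ricci identity before everything collapses to \eqref{eq q}.
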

\begin{proof}
Lemma 2.2 says the immersion exists if and only if there exists a $k$-tuple $h\m{\tau}$ such that $f_{ab}=g_{ab}-h\m{\tau}_{;a}h\m{\tau}_{;b}$ is a flat Euclidean metric.

Define $\Gamma^{a}_{bc} = \frac{1}{2} (f^{-1})^{an}(f_{nb;c}+f_{nc;b}-f_{bc;n})$ and\\  \hspace*{1.6cm}$R^{*l}_{ijk}=\Gamma^{l}_{ik;j} - \Gamma^{l}_{ij;k}+ \Gamma^{l}_{js}\Gamma^{s}_{ik} -  \Gamma^{l}_{ks}\Gamma^{s}_{ij}$.

According to \cite[Theorem 4.3]{Fodor}, the curvature of a secondary metric $f_{ab}$ on a manifold $M$ is  $R^{l}_{ijk}+R^{*l}_{ijk}$. As such,  $f_{ab}$ is flat if and only if $R^{l}_{ijk}+R^{*l}_{ijk} = 0$.

Substituting  $f_{ab}=g_{ab}-h\m{\tau}_{;a}h\m{\tau}_{;b}$, we obtain: $\Gamma^{a}_{bc} = -(f^{-1})^{an}h\m{\tau}_{;n}h\m{\tau}_{;bc}$ \\
Denote $T_{n}K_{n} = T_{a}K_{b}(f^{-1})^{ab}$ as a shorthand-contraction.\\ We have: \\
$\Gamma^{l}_{ik;j} = -(f^{-1})^{ln}_{;j}h\m{\tau}_{;n}h\m{\tau}_{;ik} - (f^{-1})^{ln}h\m{\tau}_{;nj}h\m{\tau}_{;ik}-(f^{-1})^{ln}h\m{\tau}_{;n}h\m{\tau}_{;ikj}$\\\\
$(f^{-1})^{ln}_{;j}=-(f^{-1})^{lq}f_{qm;j}(f^{-1})^{mn}=(f^{-1})^{lq}(h\m{\tau}_{;q}h\m{\tau}_{;m})_{;j}(f^{-1})^{mn}=\\\hspace*{1.2cm} =(f^{-1})^{lq}(h\m{\tau}_{;q}h\m{\tau}_{;mj}+h\m{\tau}_{;qj}h\m{\tau}_{;m})(f^{-1})^{mn}$.\\
This gives:\\
$\Gamma^{l}_{ik;j} = -(f^{-1})^{ln}(((h\m{\tau}_{;n}h\m{\tau}_{;jm})(h\m{p}_{;m}h\m{p}_{;ik})+(h\m{\tau}_{;nj}h\m{\tau}_{;m})(h\m{p}_{;m}h\m{p}_{;ik}))+\\\hspace*{3cm} +(h\m{\tau}_{;nj}h\m{\tau}_{;ik} +h\m{\tau}_{;n}h\m{\tau}_{;ikj}))$ and\\
$\Gamma^{l}_{jm}\Gamma^{m}_{ik}= (f^{-1})^{ln}(h\m{\tau}_{;n}h\m{\tau}_{;jm})(h\m{p}_{;m}h\m{p}_{;ik})$.\\\\
$\Gamma^{l}_{ik;j} + \Gamma^{l}_{jm}\Gamma^{m}_{ik}= -(f^{-1})^{ln}((h\m{\tau}_{;nj}h\m{\tau}_{;m})(h\m{p}_{;m}h\m{p}_{;ik})+(h\m{\tau}_{;nj}h\m{\tau}_{;ik} +h\m{\tau}_{;n}h\m{\tau}_{;ikj}))$ \\
$\Gamma^{l}_{ik;j} + \Gamma^{l}_{jm}\Gamma^{m}_{ik}= -(f^{-1})^{ln}((h\m{\tau}_{;nj}h\m{p}_{;ik})(h\m{\tau}_{;m}h\m{p}_{;m} + \delta\m{\tau p} )+h\m{\tau}_{;n}h\m{\tau}_{;ikj})$\\\\
$R^{l}_{ijk}+R^{*l}_{ijk} = 0  \Longrightarrow R^{l}_{ijk} + \Gamma^{l}_{ik;j} - \Gamma^{l}_{ij;k}+ \Gamma^{l}_{js}\Gamma^{s}_{ik} -  \Gamma^{l}_{ks}\Gamma^{s}_{ij} = 0 \Longrightarrow$\\
$R^{l}_{ijk} = (\Gamma^{l}_{ij;k} + \Gamma^{l}_{km}\Gamma^{m}_{ij}) - (\Gamma^{l}_{ik;j} + \Gamma^{l}_{jm}\Gamma^{m}_{ik}) $.\\\\
Multiplying by $f_{nl}$ and expanding the terms, we get:\\
$f_{nl}R^{l}_{ijk}=(h\m{\tau}_{;nj}h\m{p}_{;ik}- h\m{\tau}_{;nk}h\m{p}_{;ij})(h\m{\tau}_{;m}h\m{p}_{;m} + \delta\m{\tau p}) + h\m{\tau}_{;n}(h\m{\tau}_{;ikj} - h\m{\tau}_{;ijk})$\\\\
Applying the Ricci identity, we get:\\
$h\m{\tau}_{;n}(h\m{\tau}_{;ikj} - h\m{\tau}_{;ijk}) = h\m{\tau}_{;n}( h\m{\tau}_{;l}R^{l}_{ikj}) = -(h\m{\tau}_{;n} h\m{\tau}_{;l})R^{l}_{ijk}$\\
$(f_{nl}+h\m{\tau}_{;n} h\m{\tau}_{;l})R^{l}_{ijk}=(h\m{\tau}_{;nj}h\m{p}_{;ik}- h\m{\tau}_{;nk}h\m{p}_{;ij})(h\m{\tau}_{;m}h\m{p}_{;m} + \delta\m{\tau p})$\\
$g_{nl}R^{l}_{ijk}=(h\m{\tau}_{;nj}h\m{p}_{;ik}- h\m{\tau}_{;nk}h\m{p}_{;ij})(h\m{\tau}_{;m}h\m{p}_{;m} + \delta\m{\tau p})$\\
$R_{nijk}=(h\m{\tau}_{;nj}h\m{p}_{;ik}- h\m{\tau}_{;nk}h\m{p}_{;ij})(h\m{\tau}_{;m}h\m{p}_{;m} + \delta\m{\tau p})$
\begin{equation}
R_{nijk}=(h\m{\tau}_{;nj}h\m{p}_{;ik}- h\m{\tau}_{;nk}h\m{p}_{;ij})(h\m{\tau}_{;a}h\m{p}_{;b}(f^{-1})^{ab} + \delta\m{\tau p}). \label{eq:rnh}
\end{equation}
Now we prove $(h\m{\tau}_{;a}h\m{p}_{;b}(f^{-1})^{ab} + \delta\m{\tau p}) =( \delta\m{\tau p}-h\m{\tau}_{;a}h\m{p}_{;b}g^{ab})^{-1}$:\\\\
 $(h\m{\tau}_{;a}h\m{p}_{;b}(f^{-1})^{ab} + \delta\m{\tau p})( \delta\m{pq}-h\m{p}_{;a}h\m{q}_{;b}g^{ab})=$\\
 $=  \delta\m{\tau q} + h\m{\tau}_{;a}h\m{q}_{;b}(f^{-1})^{ab} - h\m{\tau}_{;a}h\m{q}_{;b}g^{ab} - h\m{\tau}_{;a}(f^{-1})^{ac}h\m{p}_{;c}h\m{p}_{;d}g^{db}h\m{q}_{;b} = $\\
 $=  \delta\m{\tau q} + h\m{\tau}_{;a}h\m{q}_{;b} ((f^{-1})^{ab} - g^{ab} - (f^{-1})^{ac}h\m{p}_{;c}h\m{p}_{;d}g^{db})=$\\
 $=  \delta\m{\tau q} + h\m{\tau}_{;a}h\m{q}_{;b} ((f^{-1})^{ac}(g_{cd})g^{db} - (f^{-1})^{ac}(f_{cd}) g^{db} - (f^{-1})^{ac}(h\m{p}_{;c}h\m{p}_{;d})g^{db})=$\\
$=\delta\m{\tau q} + h\m{\tau}_{;a}h\m{q}_{;b}(f^{-1})^{ac}g^{db}(g_{cd} - f_{cd} - h\m{p}_{;c}h\m{p}_{;d})= \delta\m{\tau q}$\\\\
Combining with \eqref{eq:rnh}, we get:
\begin{equation*}
R_{nijk}=(h\m{\tau}_{;nj}h\m{p}_{;ik}- h\m{\tau}_{;nk}h\m{p}_{;ij})( \delta\m{\tau p}-h\m{\tau}_{;a}h\m{p}_{;b}g^{ab})^{-1}, \end{equation*}
thus completing the proof.
\end{proof}

\noindent The algebraic relationships of \cite[Theorem 4.3]{Fodor} suffice to determine whether the curvature of a metric is $0$. However, in cases where the metric can be pseudo-Euclidean, they do not distinguish between Euclidean and pseudo-Euclidean (eg. Minkowski) flat metrics. We construct our immersion  as a section of the $\mathbb{R}^k$ bundle over a flat manifold $(M,f)$. The additional condition that $f$ be positive definite ensures that $M$ is Euclidean and not pseudo-Euclidean. Also, it is sufficient to check the condition at a single point. As $f$ is invertible, no changes of signature occur on its domain of definition. If we change the signature of $f$ in our theorem from $(k,0)$ to $(k-p,p)$, the existence of the $h\m{\alpha}, \alpha\in \{1,..., k\}$, scalar fields become instead conditions for the local immersion of our $n$-manifold in a pseudo-Euclidean space of signature $(n+k-p,p)$. Many metrics which do not admit local immersions in an Euclidean space $\mathbb{R}^{n+k}$ may instead admit immersions in a pseudo-Euclidean space of the same dimension.

\section{Immersion of $n$-manifolds in $\mathbb{R}^{n+1}$}
Due to its non-linear terms, obstructions to the solvability of \eqref{eq q} appear difficult to calculate. Here we will study the case when $k=1$, that is, we are immersing an $n$-manifold into $\mathbb{R}^{n+1}$. The $k$-tuples become singular scalars, and the equation simplifies to:
\begin{equation}\frac{h_{;pj}h_{;ik}-h_{;pk}h_{;ij}}{1-g^{ab}h_{;a}h_{;b}} = R_{pijk}.\label{eq p}\end{equation} The condition that $f_{ab}$ is positive-definite becomes $g^{ab}h_{;a}h_{;b} < 1$.
\begin{thm} \label{thm:R}
A Riemannian $n$-manifold $M$ admits a local isometric immersion in $\mathbb{R}^{n+1}$ if and only if there exists a local scalar field $h$, satisfying \eqref{eq p}, and  $g^{ab}h_{;a}h_{;b} < 1$.
\end{thm}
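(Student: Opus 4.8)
The plan is to obtain this as the $k=1$ specialization of Theorem 2.3. When $k=1$ the $k$-tuples $h\m{\alpha}$ reduce to a single scalar field $h$, the middle-index contractions collapse to ordinary products, and the symmetric $k\times k$ matrix $\delta\m{mn}-h\m{m}_{;a}h\m{n}_{;b}g^{ab}$ becomes the single real number $1-g^{ab}h_{;a}h_{;b}$. Provided this number is nonzero, its inverse as a $1\times 1$ matrix is just the reciprocal $1/(1-g^{ab}h_{;a}h_{;b})$, so equation \eqref{eq q} turns verbatim into \eqref{eq p} with numerator $h_{;pj}h_{;ik}-h_{;pk}h_{;ij}$. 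Thus the only real work is to check that the auxiliary condition of Theorem 2.3, namely that $f_{ab}=g_{ab}-h_{;a}h_{;b}$ be positive definite, is equivalent for $k=1$ to the stated scalar inequality $g^{ab}h_{;a}h_{;b}<1$, and that this inequality in particular forces the denominator to be nonzero, legitimizing the previous step.

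For that equivalence I would argue pointwise, exploiting that $f$ is a rank-one perturbation of $g$. For any tangent vector $v$ at a point, $f_{ab}v^av^b=g_{ab}v^av^b-(h_{;a}v^a)^2$. If $g^{ab}h_{;a}h_{;b}<1$, then by the Cauchy--Schwarz inequality for $g$ one has $(h_{;a}v^a)^2\leq (g^{ab}h_{;a}h_{;b})(g_{cd}v^cv^d)$, whence $f_{ab}v^av^b\geq (1-g^{ab}h_{;a}h_{;b})\,g_{cd}v^cv^d>0$ for $v\neq 0$, so $f$ is positive definite. Conversely, if $g^{ab}h_{;a}h_{;b}=s\geq 1$, take $v$ to be the $g$-gradient of $h$, i.e. $v^a=g^{ab}h_{;b}$; then $h_{;a}v^a=s$ and $g_{ab}v^av^b=s$, so $f_{ab}v^av^b=s-s^2=s(1-s)\leq 0$, and $f$ is not positive definite. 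Hence positive definiteness of $f$ is exactly $g^{ab}h_{;a}h_{;b}<1$; in particular $1-g^{ab}h_{;a}h_{;b}>0$, as needed. (As remarked after Theorem 2.3, nondegeneracy of $f$ on its domain means such a sign condition need only be checked at one point, but here the pointwise characterization already does everything.)

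Combining the two steps: $(M,g)$ admits a local isometric immersion in $\mathbb{R}^{n+1}$ if and only if, by Theorem 2.3 with $k=1$, there is a local scalar field $h$ with $f_{ab}=g_{ab}-h_{;a}h_{;b}$ positive definite satisfying \eqref{eq q}, which by the above is the same as a local scalar field $h$ with $g^{ab}h_{;a}h_{;b}<1$ satisfying \eqref{eq p}. I do not anticipate a serious obstacle here: the substance is entirely contained in Theorem 2.3, and the present statement is essentially bookkeeping together with the elementary linear-algebra fact about rank-one perturbations of a positive-definite form. The only point needing a little care is the handling of the scalar denominator — namely recognizing that the hypothesis $g^{ab}h_{;a}h_{;b}<1$ is imposed precisely so that division by $1-g^{ab}h_{;a}h_{;b}$ in \eqref{eq p} is meaningful and corresponds exactly to inverting the $1\times 1$ matrix appearing in \eqref{eq q}.
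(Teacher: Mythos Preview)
Your proposal is correct and follows exactly the paper's approach: the paper states Theorem~\ref{thm:R} without a separate proof, having already explained in the preceding paragraph that it is the $k=1$ specialization of Theorem~2.3, with the positive-definiteness condition on $f_{ab}$ becoming $g^{ab}h_{;a}h_{;b}<1$. You have supplied the Cauchy--Schwarz argument for this last equivalence that the paper simply asserts, so your write-up is in fact more complete than the original.
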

\newpage
\subsection{Recovering the fundamental theorem of hypersurfaces} \hspace{1cm}
\begin{lem}
Assume a Riemannian $n$-manifold $(M,g)$ has a local scalar field $h$ satisfying $R_{pijk} = \frac{h_{;pj}h_{;ik}-h_{;pk}h_{;ij}}{1-g^{ab}h_{;a}h_{;b}} $. Then $\Pi_{ab}= \frac{h_{;ab} }{(1-g^{ab}h_{;a}h_{;b})^{1/2}}$ satisfies $R_{pijk} = \Pi_{pj}\Pi_{ik}-\Pi_{pk}\Pi_{ij}$ (Gauss equation), and  $\Pi_{a[b;c]}=0$ (Codazzi equation).
\end{lem}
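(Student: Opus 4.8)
The plan is to check the two equations in turn. The Gauss equation will be an immediate substitution; the Codazzi equation will require the Ricci identity for third covariant derivatives of $h$, fed back through the Gauss equation. Throughout, write $w := 1 - g^{ab}h_{;a}h_{;b}$, so that $\Pi_{ab} = h_{;ab}\,w^{-1/2}$, and note that $\Pi_{ab}$ is symmetric since $h_{;ab}$ is.

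For the Gauss equation there is nothing to do beyond bookkeeping:
\[
\Pi_{pj}\Pi_{ik}-\Pi_{pk}\Pi_{ij}
= \frac{h_{;pj}h_{;ik}-h_{;pk}h_{;ij}}{w}
= R_{pijk},
\]
which is exactly the hypothesis.

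For the Codazzi equation I would differentiate $\Pi_{ab}=h_{;ab}w^{-1/2}$. By metric compatibility, $w_{;c} = -2\,h_{;cl}h^{;l}$; abbreviating $u_c := h_{;cl}h^{;l}$ (so $w_{;c}=-2u_c$), the product rule gives
\[
\Pi_{ab;c} = w^{-1/2}h_{;abc} + w^{-3/2}h_{;ab}\,u_c .
\]
Antisymmetrizing in $b$ and $c$: the second term contributes $w^{-3/2}\bigl(h_{;ab}u_c - h_{;ac}u_b\bigr)$, and the first contributes $w^{-1/2}\bigl(h_{;abc}-h_{;acb}\bigr)$. By the Ricci identity, in the sign/index convention used in the proof of Theorem 2.3, $h_{;abc}-h_{;acb}=h_{;l}R^{l}{}_{abc}$. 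The decisive step is to eliminate this curvature term using the Gauss equation already established: raising an index, $h_{;l}R^{l}{}_{abc}=h^{;p}R_{pabc}=h^{;p}\bigl(\Pi_{pb}\Pi_{ac}-\Pi_{pc}\Pi_{ab}\bigr)$, and since $h^{;p}\Pi_{pb}=w^{-1/2}h^{;p}h_{;pb}=w^{-1/2}u_b$, this equals $w^{-1}\bigl(u_b h_{;ac}-u_c h_{;ab}\bigr)$. Hence the first term contributes $w^{-3/2}\bigl(u_b h_{;ac}-u_c h_{;ab}\bigr)$, which is the exact negative of the second, so $\Pi_{ab;c}-\Pi_{ac;b}=0$, i.e. $\Pi_{a[b;c]}=0$.

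The only real obstacle is keeping the conventions straight: one must use the same sign convention for the Ricci identity as is implicit in Section 2 (where $h_{;ikj}-h_{;ijk}=h_{;l}R^{l}{}_{ikj}$ and $R_{nijk}=g_{nl}R^{l}{}_{ijk}$), and then observe that the curvature term generated by commuting the third derivatives of $h$ is precisely what cancels the terms produced by differentiating $w^{-1/2}$. No analytic input is needed — once the Gauss equation is in hand, the Codazzi identity is purely algebraic.
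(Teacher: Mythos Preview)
Your argument is correct and is precisely the computation the paper has in mind: the paper's own proof of this lemma is a single sentence (``by substituting \ldots\ we check they are satisfied''), and the detailed verification---differentiating $w^{-1/2}h_{;ab}$, invoking the Ricci identity $h_{;abc}-h_{;acb}=h_{;l}R^{l}{}_{abc}$, and eliminating the curvature term via the Gauss equation---is exactly what the paper spells out in the converse direction (the proof of the next lemma). Your bookkeeping with $w$ and $u_c$ is clean and matches the paper's conventions.
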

\begin{proof}
By substituting $\Pi_{ab}= \frac{h_{;ab} }{(1-g^{ab}h_{;a}h_{;b})^{1/2}}$ into the Gauss and Codazzi equations, we check they are satisfied.
\end{proof}

\begin{lem}
Assume a Riemannian manifold $M$ has on an open contractible subset $N$, a symmetric bilinear form $\Pi_{ab}$ satisfying $R_{pijk} = \Pi_{pj}\Pi_{ik}-\Pi_{pk}\Pi_{ij}$ and $\Pi_{a[b;c]}=0$. Then, for any point $p \in N$, and any initial conditions $h$, and $h_{;a}$ at $p$, with $h_{;a}h_{;b}g^{ab}< 1$, there is a maximal open connected set $S\subset N$, $p\in S$, on which there exists $h$ satisfying $\Pi_{ab}= \frac{h_{;ab} }{(1-g^{ab}h_{;a}h_{;b})^{1/2}}$, and the initial conditions at $p$. The field $h$ is unique. 
\end{lem}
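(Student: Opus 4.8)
\section*{Proof plan}

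The plan is to reduce the second-order equation for $h$ to a first-order total differential system and solve it by Frobenius. I would introduce an auxiliary covector field $q_a$ and look for $(h,q_a)$ satisfying
\begin{equation*}
h_{;a} = q_a, \qquad q_{a;b} = \Pi_{ab}\,\phi, \qquad \phi := \bigl(1 - g^{cd}q_c q_d\bigr)^{1/2}.
\end{equation*}
This system lives on the open region $\mathcal{U} = \{(x,q)\in T^{*}N : g^{cd}(x)q_c q_d < 1\}$ (where $\phi$ is defined and smooth), and a solution with $q=dh$ is exactly a solution of $\Pi_{ab} = h_{;ab}/(1-g^{cd}h_{;c}h_{;d})^{1/2}$; the prescribed $h$ and $h_{;a}$ at $p$, with $h_{;a}h_{;b}g^{ab}<1$, are admissible initial data, i.e.\ an initial point of $\mathcal{U}$ together with a value of $h$. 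The system determines a rank-$n$ distribution on (an $\mathbb R$-extension of) $\mathcal U$, and solving it amounts to showing this distribution is involutive.

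First I would dispose of the compatibility of $h_{;a}=q_a$: along a solution $q_{a;b}=\Pi_{ab}\phi$ is symmetric in $a,b$ (because $\Pi$ is symmetric and $\phi$ is a scalar), so $q$ is closed, as it must be. The substantive check is the integrability of the second equation. Differentiating $q_{a;b}=\Pi_{ab}\phi$ and using metric compatibility gives $\phi_{;c} = -g^{de}q_d q_{e;c}/\phi = -q^{e}\Pi_{ec}$, hence
\begin{gather*}
q_{a;bc} = \Pi_{ab;c}\,\phi - \Pi_{ab}\Pi_{ec}\,q^{e}, \\
q_{a;bc} - q_{a;cb} = \phi\bigl(\Pi_{ab;c} - \Pi_{ac;b}\bigr) - \bigl(\Pi_{ab}\Pi_{ec} - \Pi_{ac}\Pi_{eb}\bigr)q^{e}.
\end{gather*}
The Codazzi equation $\Pi_{a[b;c]}=0$ kills the first bracket; the Gauss equation gives $\Pi_{ab}\Pi_{ec}-\Pi_{ac}\Pi_{eb}=R_{aebc}$, so $q_{a;bc}-q_{a;cb} = -R_{aebc}q^{e}$. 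By the antisymmetry $R_{aebc}=-R_{eabc}$ of the curvature tensor in its first pair of indices, this equals $q_l R^{l}{}_{abc}$, which is precisely the value required by the Ricci identity. Thus the distribution is involutive exactly when the Gauss and Codazzi equations hold, which is our hypothesis.

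With integrability established, the Frobenius theorem produces, through the point of $\mathcal U$ prescribed by the initial data, a unique maximal integral manifold $L$; since the distribution is transverse to the fibres over $N$, the projection $L\to N$ is a local diffeomorphism onto an open set. Because $N$ is contractible (hence simply connected) there is no monodromy: a local solution continues along any path in $N$ that stays in $\mathcal U$, and the continuation is path-independent, so $L$ is a graph $q=q(x)$, $h=h(x)$ over a connected open set. Taking the union of all connected open sets containing $p$ that carry a solution with the given initial data — these agree on overlaps by uniqueness of the continuation — yields the maximal $S\subset N$ together with the desired $h$ (and $q=dh$) on it, and uniqueness of $h$ is immediate from this construction. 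The main obstacle is the integrability computation above: keeping the curvature conventions consistent and seeing that the identity $\phi_{;c}=-q^{e}\Pi_{ec}$ is exactly what makes the nonlinear $\phi$-term reorganize into the Riemann tensor, so that Gauss and Codazzi are precisely what is needed. A secondary subtlety is that $S$ may be a proper subset of $N$: the only constraint preventing further extension is that the solution must remain in the region $g^{cd}q_cq_d<1$, which it may leave by approaching the boundary $g^{cd}q_cq_d=1$.
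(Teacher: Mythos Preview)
Your proposal is correct and follows essentially the same approach as the paper: both reduce the second-order equation to the first-order system $h_{;a}=q_a$, $q_{a;b}=\Pi_{ab}(1-|q|^2)^{1/2}$ and verify that its integrability condition (the Ricci identity for $q_a$) is satisfied precisely when the Gauss and Codazzi equations hold, with the key step being the identity $\phi_{;c}=-q^e\Pi_{ec}$ that turns the nonlinear term into the curvature. Your Frobenius packaging and explicit handling of the domain $\mathcal{U}$, monodromy, and maximality are a tidier version of the paper's path-integration argument, but the mathematical content is the same.
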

\begin{proof}
The existence of a field of real symmetric bilinear forms $\Pi_{ab}$ satisfying $R_{pijk} = \Pi_{pj}\Pi_{ik}-\Pi_{pk}\Pi_{ij}$ gives the first obstruction to the existence of a height function $h$, satisfying  $\Pi_{ab}= \frac{h_{;ab} }{(1-g^{ab}h_{;a}h_{;b})^{1/2}}$. Its existence also guarantees that $g^{ab}h_{;a}h_{;b} < 1$  is automatically satisfied, as having $g^{ab}h_{;a}h_{;b} \geq 1$ would introduce imaginary factors in $\Pi$. The form $\Pi_{ab}$ will be shown to be  the second fundamental form of Gauss.

Given the existence of such a form, can we always locally construct $h$ satisfying $\frac{h_{;ab}}{(1-g^{ab}h_{;a}h_{;b})^{1/2}} = \Pi_{ab}$? The case of $3$-manifolds provides a counterexample, as $3$-manifolds with a given $\Pi_{ab}$ satisfying the Gauss equation can always be found, yet not all such manifolds admit immersion into $\mathbb{R}^4$. There exists an additional obstruction, which turns out to be the Codazzi equation.

We now shall study the obstructions to recovering $h$ from $\Pi$.
Consider a path $c^{n}:[0,1]\rightarrow M$ from a point $p$ to $q$, parameterized by natural parameter $t$, and fix $h_{;a}$ at $p$, as an initial condition with $g^{ab}h_{;a}h_{;b}<1$. Knowing   $\Pi_{ab} = \frac{h_{;ab}}{(1-g^{ab}h_{;a}h_{;b})^{1/2}}$, we can recover $h_{;ab}$ at $0$. Thus, we recover the $h_{;a}$ along the path by integrating the equation: 
\begin{equation}
\frac {D {h_{;a}}}{dt}  = (1-g^{mn}h_{;m}h_{;n})^{1/2} \Pi_{ab} \frac{d c^b}{dt}, \quad g^{ab}h_{;a}h_{;b}<1. \label{eq:dha}
\end{equation}
This gives us the maximal domain $S\subset N$ and the initial conditions for $h$, and $h_{;a}$ at $p$ guarantee uniqueness. 

A vector field $h_{a}$ satisfying  $\Pi_{ab} = \frac{h_{a;b}}{(1-g^{ab}h_{a}h_{b})^{1/2}}$ admits a local existence if and only if the result of such integration is independent of the path.
$h_{a;b}$ must obey the Ricci identity:  $h_{a;bc}-h_{a;cb} = h_{n}R^{n}_{abc}$. We do not know $h_{a;b}$, but only  $\Pi_{ab} = \frac{h_{a;b}}{(1-g^{ab}h_{a}h_{b})^{1/2}}$. As such, we will apply the anticommutator to the last two indices of $\Pi_{ab;c}$, and examine the result. \\ $\Pi_{a[b;c]} = \frac{h_{a;[bc]}}{(1-g^{ab}h_{a}h_{b})^{1/2}}+\frac{1}{2}((\frac{1}{(1-g^{ab}h_{a}h_{b})^{1/2}})_{;c}h_{a;b}- (\frac{1}{(1-g^{ab}h_{a}h_{b})^{1/2}})_{;b}h_{a;c})$.\\
Expanding $(\frac{1}{(1-g^{ab}h_{a}h_{b})^{1/2}})_{;c}$, we get $\frac{-2{g^{ab}h_{a}h_{b;c}}}{(1-g^{ab}h_{a}h_{b})^{3/2}}$. Substituting in $\Pi_{a[b;c]}$ and grouping the terms, we obtain: $\Pi_{a[b;c]} = \frac{h_{a;[bc]} - \frac{h_{n}g^{nm}(h_{m;b}h_{ac}-h_{m;c}h_{a;b})}{2(1-g^{ab}h_{a}h_{b})} }{(1-g^{ab}h_{a}h_{b})^{1/2}}$
$\Pi_{a[b;c]} = \frac{h_{a;[bc]} -\frac{1}{2} h_{n}{\mathbb{R}^n}_{abc}}{(1-g^{ab}h_{a}h_{b})^{1/2}}$.\\
The Ricci identity for $h_{a}$ holds if and only if $\Pi_{a[b;c]}=0$. Thus, $h_{a}$ exists if and only if $\Pi_{a[b;c]}=0$. Or, $\Pi_{ab}$ is a Codazzi tensor. This is the second obstruction to the local existence of $h$. However, since $\Pi_{ab}$ is symmetric, so is $h_{;ab}$. Therefore, if $h_{;a}$ exists, so does $h$.
\end{proof}
\subsubsection{Notes on the extensibility of solutions}  
Setting initial conditions $h$ and $h_{;a}$ at a given point, we can define and extend the $h$ field satisfying the above equations in any direction on a contractible set, until $h_{;a}h_{;b} g^{ab} = 1$. If we take $h_{;a}(p) = 0$ as initial condition on a point $p$, and set $|v^{a}\Pi_{ab}v^{b}| \leq r$ for $v^{a}g_{ab}v^{b} = 1$, then we can define the $h$-field on at least a ball of radius $\frac{\pi}{2r}$ around $p$. The equation $$\frac {D {h_{;a}}}{dt}  = (1-g^{mn}h_{;m}h_{;n})^{1/2} r \frac{d c^a}{dt},$$ obtained by maximising \eqref{eq:dha}, for a geodesic $c^{a}$, gives  $|h_{;a}| = \sin(rd)$ at a distance of $d$ from $p$.

Given a resulting immersion, this is also a lower bound for the injectivity radius of its projection to the hyperplane tangent to $p$. As we shall see in the following theorem, different initial conditions for $h$ at a point result in different immersions that are rigid transformations of one another, and are defined on different subdomains of $m$ . We can glue them together (applying transformations to their immersion coordinates to make them match)  to extend a solution past the injectivity radius. Note that the resulting $h$ of the extended solution might not always satisfy $\Pi_{ab}= \frac{h_{;ab} }{(1-g^{ab}h_{;a}h_{;b})^{1/2}}$ : there may be regions past the injectivity radius where it satisfies $\Pi_{ab}= - \frac{h_{;ab} }{(1-g^{ab}h_{;a}h_{;b})^{1/2}}$. Both these equations are equally valid solutions for $R_{pijk} = \frac{h_{;pj}h_{;ik}-h_{;pk}h_{;ij}}{1-g^{ab}h_{;a}h_{;b}}$ (see Theorem \ref{thm:R}). To understand this phenomenon, we shall see how $h$ behaves on the simple example of the circle immersed in $\mathbb{R}^{2}$. We have $f:[-\pi,\pi]\rightarrow \mathbb{R}^2$,  $f(x) = (\cos(x),-\sin(x))$, giving us $h(x) = -\sin(x)$. The circle has a constant principal curvature of $1$, giving us the equations: $h^{\prime\prime} = {\pm}(1-h^{\prime}h^{\prime})^{\frac{1}{2}}$ . We see that $h(x) = -\sin(x)$ satisfies the positive-signed equation on $[0,\frac{\pi}{2}]$ and the negative-signed equation on  $[-\frac{\pi}{2},0]$. At $0$ we have a region on which $|h^{\prime}|= 1$. By reversing the choice of sign of our second fundamental form, we either reverse the sign of $h$, or swap the domains on which these two variants of the equation are satisfied.\\

\noindent Next we present a new proof for the fundamental theorem of hypersurfaces, \cite[Theorem 7.1]{KN}.

\begin{thm}  {\bf Fundamental theorem of hypersurfaces:}\\
A Riemannian $n$-manifold $M$ admits a local isometric immersion in $\mathbb{R}^{n+1}$ if and only if there exists locally a symmetric $\Pi_{ab}$ satisfying ${R_{abcd}}$ = $\Pi_{ac}\Pi_{bd}-\Pi_{ad}\Pi_{bc}$ and $\Pi_{a[b;c]}=0$. For any given $\Pi_{ab}$ satisfying these equations, there exists a local immersion, unique up to rigid transformations, such that $\Pi_{ab}$ acts as the second fundamental form of said immersion.
\end{thm}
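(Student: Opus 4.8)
The plan is to deduce Theorem~3.2 from the chain of equivalences already assembled. By Theorem~3.1, a local isometric immersion in $\mathbb{R}^{n+1}$ exists if and only if there is a local scalar $h$ satisfying \eqref{eq p} together with $g^{ab}h_{;a}h_{;b}<1$. So the proof splits into two implications plus a uniqueness/rigidity statement.

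\medskip

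\noindent\textbf{From an immersion (or from $h$) to $\Pi_{ab}$.} Suppose first that $M$ admits a local immersion in $\mathbb{R}^{n+1}$. By Theorem~3.1 we get $h$ with \eqref{eq p} and $g^{ab}h_{;a}h_{;b}<1$; then Lemma~3.3 (with the substitution $\Pi_{ab}=h_{;ab}/(1-g^{ab}h_{;a}h_{;b})^{1/2}$) produces a symmetric $\Pi_{ab}$ satisfying the Gauss equation $R_{abcd}=\Pi_{ac}\Pi_{bd}-\Pi_{ad}\Pi_{bc}$ and the Codazzi equation $\Pi_{a[b;c]}=0$. I should also remark that this $\Pi_{ab}$ coincides with the classical second fundamental form: with $i=(m,h)$ the immersion realized as a height graph over the flat chart $(M,f)$, the unit normal is the usual one and a direct computation of $\langle \nabla_a \nabla_b i,\nu\rangle$ recovers exactly $h_{;ab}/(1-g^{cd}h_{;c}h_{;d})^{1/2}$ — so the $\Pi$ obtained abstractly is geometrically the right object.

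\medskip

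\noindent\textbf{From $\Pi_{ab}$ to an immersion.} Conversely, suppose $\Pi_{ab}$ symmetric satisfies Gauss and Codazzi on a contractible neighborhood $N$. Pick $p\in N$, pick initial data $h(p)$ and $h_{;a}(p)$ with $g^{ab}h_{;a}h_{;b}<1$ at $p$ (e.g.\ $h_{;a}(p)=0$). Lemma~3.4 then yields a unique $h$ on a maximal connected $S\ni p$ solving $\Pi_{ab}=h_{;ab}/(1-g^{cd}h_{;c}h_{;d})^{1/2}$; squaring and antisymmetrizing, this $h$ satisfies \eqref{eq p}, and $g^{ab}h_{;a}h_{;b}<1$ is automatic (otherwise $\Pi$ would pick up imaginary factors, as noted after Lemma~3.4). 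Theorem~3.1 then converts $h$ into an actual isometric immersion on $S$; unwinding Lemma~2.2, the immersion is the graph $r\mapsto(m(r),h(r))$ over the flat chart $(M,f)$ with $f_{ab}=g_{ab}-h_{;a}h_{;b}$, and by the previous paragraph $\Pi_{ab}$ is its second fundamental form.

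\medskip

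\noindent\textbf{Rigidity.} Finally, for rigidity: two immersions associated to the same $\Pi_{ab}$ correspond (via Lemma~3.4) to solutions $h$ differing only by the choice of $h(p)$ and $h_{;a}(p)$. Changing $h(p)$ by a constant translates the graph vertically in the $\mathbb{R}^{n+1}$ fiber; changing $h_{;a}(p)$ (subject to $g^{ab}h_{;a}h_{;b}<1$) amounts to a different choice of which hyperplane plays the role of the tangent plane at $p$, i.e.\ a rotation of $\mathbb{R}^{n+1}$ mixing the flat base directions with the fiber direction — concretely, the ambiguity is exactly the choice of the faithful chart $m$ for $(M,f)$ (unique up to a Euclidean motion of $\mathbb{R}^n$, since $f$ is flat) together with the vertical translation, and these assemble into a rigid motion of $\mathbb{R}^{n+1}$. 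Hence any two such immersions differ by an isometry of $\mathbb{R}^{n+1}$.

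\medskip

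\noindent The step I expect to be the main obstacle is the rigidity claim: making precise that \emph{every} local immersion with second fundamental form $\Pi_{ab}$ actually arises from the Lemma~3.4 construction (not just some distinguished one), so that the finite-dimensional ambiguity in initial data exhausts all immersions. This requires arguing that an arbitrary immersion, after an ambient Euclidean motion, can be put in the height-graph normal form of Lemma~2.2 over the canonical flat chart, and that its height function then solves the Lemma~3.4 ODE with \emph{some} admissible initial data at $p$; the uniqueness half of Lemma~3.4 finishes the job. The analytic subtleties about maximal domains and the possible sign flip $\Pi_{ab}=-h_{;ab}/(1-g^{cd}h_{;c}h_{;d})^{1/2}$ past the injectivity radius (discussed in \S3.2.1) should be confined to a remark, since locally near $p$ only one sign occurs.
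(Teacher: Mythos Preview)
Your proposal is correct and follows essentially the same route as the paper: construct $h$ from $\Pi_{ab}$ via Lemma~3.4, build the immersion as the graph $(f\m{\tau},h)$ over the flat chart, verify that its second fundamental form is $h_{;ab}/(1-g^{cd}h_{;c}h_{;d})^{1/2}$, and argue rigidity by matching the freedom in initial data $(h(p),h_{;a}(p),f\m{\tau})$ to the freedom in ambient rigid motions. The paper fills in exactly the two places you flag as deferred---it carries out the ``direct computation'' of the second fundamental form explicitly (via the sublemma $\Pi\m{\upsilon}_{ab}=I\m{\upsilon}_{;ab}$ and the explicit unit normal $(1-g^{ab}h_{;a}h_{;b})^{1/2}(-f\m{\tau}_{;p}(f^{-1})^{pk}h_{;k},1)$), and it resolves your self-identified rigidity gap by the frame-normalization you anticipate: after a rigid motion one forces $i(p)=0$, $\nabla i(v^k)=e_k$, and $e_{n+1}$ normal, which pins down $h(p)=0$, $h_{;k}(p)=0$, and the flat chart $f\m{\tau}$ uniquely.
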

\begin{proof}
By choosing a point $p\in M$ and setting $h_{;a}(p)$ and $h(p)$ as initial conditions, the given $\Pi_{ac}$ satisfying the Gauss and Codazzi equations allows us to construct a unique $h$-field in a neighborhood $M_{p}$ of $p$, satisfying  $ \frac{h_{;ab} }{(1-g^{ab}h_{;a}h_{;b})^{1/2}}=\Pi_{ab}$, as shown if Theorem 3.1. The resulting immersion will have the form $i:M_{p}\rightarrow \mathbb{R}^{n+1}$, $i(x)=(f\m{\tau}(x),h(x))$, with $\tau\in\{1,2,..n\}$, where $f\m{\tau}:M_{p}\rightarrow \mathbb{R}^{n}$ is a local isometric immersion of $(M_{p}, f_{ab})$ into $\mathbb{R}^{n}$,  with $f_{ab}$ being the flat metric $f_{ab}=g_{ab}-h_{;a}h_{;b}= f\m{\tau}_{;a}f\m{\tau}_{;b}$, and associated unit-normal field $(1-g^{ab}h_{;a}h_{;b})^{\frac{1}{2}}(-f\,{\tau}_{;p}(f^{-1})^{pk}h_{;k},1)$. The proof consists of two steps:
first we show that for an immersion of the form  $i(x)=(f\m{\tau}(x),h(x))$, the second fundamental form satisfies $\Pi_{ab} = \frac{h_{;ab} }{(1-g^{ab}h_{;a}h_{;b})^{1/2}}$, then we show that "fixing" an immersion of $M_{p}$ with given $\Pi_{ab}$, in $\mathbb{R}^{n+1}$ using a rigid transformations that identifies the point $p$ with a point $i(p)\in \mathbb{R}^{n+1}$ and a normal and $n$-frame at $p$ with an $(n+1)$-frame at $i(p)$ is equivalent to picking the initial conditions of $h(p)$, $h_{;k}(p)$ and $f\m{\tau}:(M_{p}, f_{ab})\rightarrow \mathbb{R}^{n}$ that uniquely determine the solution to the equation. As such, all local isometric immersions of $(M_{p},g)$ in $\mathbb{R}^{n+1}$ with a the same given (scalar-valued) fundamental  form $\Pi_{ab}$ are rigid-transformations of one-another.\\
\noindent\rule{8cm}{0.4pt}
\begin{lem}
For a given isometric immersion $I\m{\upsilon}:(M,g)\rightarrow \mathbb{R}^{n+k}$ with $\upsilon  \in\{1,2..n+k\}$, the second fundamental form (taken as a vector-valued form with values in the normal-bundle as  sub-bundle of $\mathbb{R}^{n+k}$) takes the form $\Pi\m{\upsilon}_{ab}=I\m{\upsilon}_{;ab}$.
\end{lem}
\begin{proof}
Let $V^{a}$ be a vector-field and $V\m{\upsilon} = V^{a} I\m{\upsilon}_{;a}$ be its pushforward in the ambient-bundle of the immersion. We have $V\m{\upsilon}_{;b} =  V^{a}_{;b} I\m{\upsilon}_{;a} + V^{a} I\m{\upsilon}_{;ab}$. As  $I\m{\upsilon}_{;a}$  acts as a pushforward from the tangent bundle of the manifold to the tangent bundle of the immersion, using the Gauss formula, we identify  $V_{a;b} I\m{\upsilon}_{;a}$ as the push-forward of the Levi-Civita connection applied to $V^{a}$, and $I\m{\upsilon}_{;ab}$ as the second fundamental form, with values in the normal bundle.
\end{proof}

\noindent\rule{8cm}{0.4pt}\\
\noindent Applying to our $i\m{\epsilon}:M_{p}\rightarrow \mathbb{R}^{n+1}$, $i(x)=(f\m{\tau}(x),h(x)),\epsilon\in\{1,2,....n+1\}$ we get $\Pi\m{\epsilon}_{ab}=(f\m{\tau}_{;ab},h_{;ab})$, $\tau\in\{1,2,....n\}$ as vector-valued second fundamental form.

It can be checked that $V\m{\epsilon} = (1-g^{ab}h_{;a}h_{;b})^{\frac{1}{2}}(-f\,{\tau}_{;p}(f^{-1})^{pk}h_{;k},1)$ is a valid choice of unit-normal field , as $V\m{\epsilon}V\m{\epsilon} = 1$ (unit) and $({V\m{\epsilon}})(i\m{\epsilon}_{;p}) = 0$ (normal). We recover our scalar-valued second-fundamental form as $\Pi_{ab} =  \Pi\m{\epsilon}_{ab}V\m{\epsilon}$. This gives  $\Pi_{ab}= (h_{;ab} - (f\m{\tau}_{;ab}f\,{\tau}_{;p})(f^{-1})^{pk}h_{;k})(1-g^{ab}h_{;a}h_{;b})^{\frac{1}{2}}$. (3)\\
We have $f\m{\tau}_{;ab}f\,{\tau}_{;p}=\frac{1}{2}((f\m{\tau}_{;a}f\m{\tau}_{;p})_{;b}+(f\m{\tau}_{;b}f\m{\tau}_{;p})_{;a}-(f\m{\tau}_{;a}f\m{\tau}_{;b})_{;p} )= \\ \hspace*{3cm}=-\frac{1}{2}((g_{ap}-h_{;a}h_{;p})_{;b}+(g_{bp}-h_{;b}h_{;p})_{;a}-(g_{ab}-h_{;a}h_{;b})_{;p})=\\ \hspace*{3cm}=\frac{1}{2}((h_{;a}h_{;p})_{;b}+(h_{;b}h_{;p})_{;a}-(h_{;a}h_{;b})_{;p} )= (-h_{;ab}h_{;p})$ \hspace{0.25cm} (4)\\
Substituting (4) in (3) gives:\\ $\Pi_{ab}= (h_{;ab} + h_{;ab}h_{;p}(f^{-1})^{pk}h_{;k})(1-g^{ab}h_{;a}h_{;b})^{\frac{1}{2}} =  \\\hspace*{0,6cm}=h_{;ab}(1+ h_{;p}h_{;k}(f^{-1})^{pk})(1-g^{ab}h_{;a}h_{;b})^{\frac{1}{2}} $ \hspace*{0.25cm} (5) \\Equation (2) from the proof of theorem 2.3 gives:\\ $(1+ h_{;p}h_{;k}(f^{-1})^{pk}) = (1-g^{ab}h_{;a}h_{;b})^{-1}$. \\ Substituting in (5) gives:\\
$\Pi_{ab}=\frac{h_{;ab} }{(1-g^{ab}h_{;a}h_{;b})^{1/2}}$, as expected. This completes the first part of the proof.

Consider an immersion $i:M_{p}\rightarrow \mathbb{R}^{n+1}$, split as $i(x)=(f\m{\tau}(x),h(x))$. Let $(v^1,v^2,v^3...v^n)$ be an orthonormal frame at $p\in M_{p}$, and require that $i(p)=(0,0...0)$, $ \nabla i(v^k)=e_{k}$, the standard basis at $p$, and $e_{n+1}$ be the unit-normal vector of the immersion at $i(p)$. Knowing that $e_{n+1}= (0\m{\tau},1)$ is the unit-normal at $i(p)$ gives $h_{;k}(p)=0$. Additionally, $i(p)=(0,0...0)$ gives $h(p)=0$. As $\Pi_{ab}$ is given, this determines a unique $h$-field. We have $g_{ab}(p)=f_{ab}(p)$ and $i\m{\epsilon}_{;k}(p) =(f\m{\tau}_{;k}(p),h_{;k}(p))= (f\m{\tau}_{;k}(p),0_{;k})$. The immersion $f\m{\tau}:(M_{p}, f_{ab})\rightarrow \mathbb{R}^{n}$ is required to take $p$ to $(0,0...0)$ and the frame $(v^1,v^2,v^3...v^n) \in T_{p}M$ to $(e_1,e_2,e_3...e_n)\in T_{0}\mathbb{R}^n$. As such, it is uniquely determined, and so is $i\m{\epsilon}$ for the given $\Pi_{ab}$ satisfying the Gauss and Codazzi equations. This completes the proof.
\end{proof}

\section{Reducing the equation to obstruction-form}
In this section we derive necessary and sufficient conditions for the existence of $\Pi_{ab}$ satisfying the Gauss and Codazzi equations (hence the existence of local immersions) as the vanishing of certain obstructions, in manifolds of positive-sectional curvature and $n\geq 3$.

\begin{prop}
For a given tensor $R_{pijk}$, possessing the symmetries of the curvature tensor, there exists an $n$-manifold  $M$ immersed in $\mathbb{R}^{n+1}$, such that $R_{pijk}$ is equal to the Riemannian curvature field of $M$ at a given point, if and only if there exists a symmetric tensor $\Pi_{ab}$ such that  our tensor satisfies: $$\Pi_{pj}\Pi_{ik}-\Pi_{pk}\Pi_{ij}=R_{pijk}.$$
\end{prop}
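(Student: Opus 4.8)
The plan is to prove the two implications separately, handling the substantive (``if'') direction by an explicit osculating-paraboloid construction rather than by reconstructing the fundamental theorem of hypersurfaces. For the ``only if'' direction, suppose $M\subset\mathbb{R}^{n+1}$ is a hypersurface and $p$ the prescribed point. Its second fundamental form (with respect to a local unit normal) is a symmetric $(0,2)$-tensor $\Pi_{ab}$, and the classical Gauss equation — established for the present setting in the course of proving Theorem~3.4 — gives $R_{pijk}=\Pi_{pj}\Pi_{ik}-\Pi_{pk}\Pi_{ij}$ at every point; evaluating at $p$ yields precisely the symmetric tensor demanded by the statement (one simply discards the Codazzi equation that Theorem~3.4 also supplies).

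For the ``if'' direction, let $\Pi_{ab}$ be symmetric with $R_{pijk}=\Pi_{pj}\Pi_{ik}-\Pi_{pk}\Pi_{ij}$. I would fix a small ball $U\ni 0$ in $\mathbb{R}^n$, set $h(x)=\tfrac12\Pi_{ab}x^ax^b$, and take $\iota\colon U\to\mathbb{R}^{n+1}$, $\iota(x)=(x^1,\dots,x^n,h(x))$, the associated graph, which is visibly a smooth immersion; endow $U$ with the induced metric $g$. Since $h$ is a scalar, $h_{;a}=h_{,a}$, so $g_{ab}=\delta_{ab}+h_{;a}h_{;b}$; hence $f_{ab}:=g_{ab}-h_{;a}h_{;b}=\delta_{ab}$ is flat, and $(U,g)$ is isometrically immersed in $\mathbb{R}^{n+1}$ with $h$ the height function over the flat base $(U,f)$ — exactly the configuration of Lemma~2.2 for $k=1$. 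Consequently the curvature identity obtained in the proof of Theorem~2.3 applies in its $k=1$ form \eqref{eq p}, namely $\frac{h_{;pj}h_{;ik}-h_{;pk}h_{;ij}}{1-g^{ab}h_{;a}h_{;b}}=R_{pijk}$, at every point of $U$. (Equivalently, one may compute directly that the vector-valued second fundamental form of $\iota$ is $\iota_{;ab}$, cf.\ Lemma~3.5, and apply Gauss.)

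It then remains to read off the value at the vertex $x=0$. There $h_{,a}(0)=\Pi_{ab}0^b=0$, and since $g_{ab}=\delta_{ab}+\Pi_{ac}\Pi_{bd}x^cx^d$ all first derivatives of the $g_{ab}$ vanish at $0$, so the Christoffel symbols of $g$ vanish at $0$; hence $h_{;ab}(0)=h_{,ab}(0)=\Pi_{ab}$ and $g^{ab}h_{;a}h_{;b}=0$ there. Substituting into \eqref{eq p} gives, at $0$, curvature $\Pi_{pj}\Pi_{ik}-\Pi_{pk}\Pi_{ij}=R_{pijk}$, under the harmless identification of $T_0U$ (on which $g$ restricts to $\delta$ at $0$) with the $n$-dimensional space carrying the given tensor. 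Thus $(U,g)$ is an $n$-manifold isometrically immersed in $\mathbb{R}^{n+1}$ whose curvature at the chosen point equals $R_{pijk}$, as required. I do not anticipate a genuine obstacle; the only points needing care are that the evaluation must be performed at the vertex of the paraboloid, where $dh$ vanishes so that the second fundamental form is the bare Hessian of $h$ (and the denominator of \eqref{eq p} equals $1$), and the identification of the tangent space with the abstract vector space on which $R_{pijk}$ lives. An alternative — extending $\Pi_{ab}$ to a tensor field satisfying both Gauss and Codazzi and invoking Theorem~3.4 — would require simultaneously constructing a compatible metric and verifying the Codazzi equation, which the explicit paraboloid sidesteps.
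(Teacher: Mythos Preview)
Your proposal is correct and takes essentially the same approach as the paper: the ``only if'' direction cites the Gauss equation established earlier, and the ``if'' direction uses the osculating paraboloid $\iota(x)=(x,\tfrac12\Pi_{ab}x^ax^b)$ evaluated at the vertex. The paper's proof simply asserts that the induced metric has the required curvature at $v=0$, whereas you supply the verification (vanishing Christoffels, $h_{;ab}(0)=\Pi_{ab}$, unit denominator) in detail; but the construction and the idea are identical.
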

\begin{proof}
The fact that an immersion requires the existence of a $\Pi_{ab}$ tensor was proven above. For the converse, use the $\Pi_{ab}$ tensor to construct the immersion given by: $f:\mathbb{R}^{n}\rightarrow \mathbb{R}^{n+1}$, $f(v^n) = (v^n, \frac{\Pi_{ab}v^av^b}{2})$. The induced metric will have the required curvature tensor $R_{pijk}=\Pi_{pj}\Pi_{ik}-\Pi_{pk}\Pi_{ij}$ at coordinates $v^n=0$.
\end{proof}

\begin{lem}
A sectionally-positive tensor $R_{pijk}$ admitting a form  $R_{pijk}=\Pi_{pj}\Pi_{ik}-\Pi_{pk}\Pi_{ij}$ is positive-definite (as an operator on the space of $2$-forms).
\end{lem}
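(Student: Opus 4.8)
The plan is to squeeze out of sectional positivity the fact that the Gauss tensor $\Pi_{ab}$ must be a \emph{definite} bilinear form, and then to diagonalise $\Pi$ against the metric $g$ and read off the curvature operator explicitly in the resulting basis, where it becomes manifestly positive.

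\emph{Step 1: $\Pi$ is definite.} Since $g$ is positive-definite and $\Pi$ is symmetric, I would choose a $g$-orthonormal basis $\{e_1,\dots,e_n\}$ simultaneously diagonalising $\Pi$, say $\Pi(e_i,e_j)=\lambda_i\delta_{ij}$. Substituting the pair $(e_i,e_j)$ with $i\neq j$ into $R_{pijk}=\Pi_{pj}\Pi_{ik}-\Pi_{pk}\Pi_{ij}$ gives $R(e_i,e_j,e_i,e_j)=\lambda_i\lambda_j$, the cross term dropping out because $\Pi(e_i,e_j)=0$. Sectional positivity then forces $\lambda_i\lambda_j>0$ for every $i\neq j$; since $n\geq 2$ this means all the $\lambda_i$ share a single sign and none vanishes, i.e.\ $\Pi$ is definite. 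Replacing $\Pi$ by $-\Pi$ if necessary — which does not change $R_{pijk}$ — I may assume $\lambda_i>0$ for all $i$.

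\emph{Step 2: compute the curvature operator.} In the same $g$-orthonormal basis $\Pi_{ab}=\lambda_a\delta_{ab}$, so $R_{pijk}=\lambda_p\lambda_i(\delta_{pj}\delta_{ik}-\delta_{pk}\delta_{ij})$; in particular $R_{pijk}\neq 0$ only when $\{p,i\}=\{j,k\}$. Taking the induced orthonormal basis $\{e^a\wedge e^b\}_{a<b}$ of $\Lambda^2$, the associated quadratic form on a $2$-form $\omega$ is $\langle\mathcal R\omega,\omega\rangle=\sum R_{pijk}\,\omega^{pi}\omega^{jk}=2\sum_{p,i}\lambda_p\lambda_i(\omega^{pi})^2$. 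Because $\omega$ is antisymmetric, the $p=i$ terms vanish, and every remaining coefficient $\lambda_p\lambda_i$ with $p\neq i$ is strictly positive; hence $\langle\mathcal R\omega,\omega\rangle>0$ for all nonzero $\omega$, and $\mathcal R$ is diagonal in this basis with eigenvalue (a positive multiple of) $\lambda_a\lambda_b$ on $e^a\wedge e^b$. This is exactly positive-definiteness of $R_{pijk}$ as an operator on $2$-forms.

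The only genuine content is Step 1: turning "positive sectional curvature" into "$\Pi$ is definite." The point requiring care is that the eigenvectors of $\Pi$ can indeed be taken $g$-orthonormal, so that the off-diagonal evaluations $\Pi(e_i,e_j)$ truly vanish and the sectional curvature in the plane $\mathrm{span}(e_i,e_j)$ reduces to $\lambda_i\lambda_j$; and that $\lambda_i\lambda_j>0$ for \emph{all} $i\neq j$ (not merely one pair) is what pins down a single sign, which is where $n\geq 2$ enters. Step 2 is then a direct computation, the only pitfall being to keep the antisymmetrisation and the normalisation of $\Lambda^2$ consistent so the overall constant stays positive.
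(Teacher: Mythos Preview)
Your proof is correct and follows essentially the same route as the paper: diagonalise $\Pi$ in a $g$-orthonormal basis so that the curvature operator has the decomposable $2$-forms $e^a\wedge e^b$ as eigenvectors with eigenvalues $\lambda_a\lambda_b$, and then observe these eigenvalues are precisely the sectional curvatures of the coordinate planes and hence positive. Your explicit intermediate step that $\Pi$ is definite is not strictly needed here (the paper omits it), but it is a correct and useful observation that the paper in fact relies on later in Theorem~4.4.
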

\begin{proof}
Let $(v^1,v^2,....v^n)$ be an orthonormal basis that diagonalizes $\Pi_{ab}$, with eigenvalues $(\lambda_1, \lambda_2,....\lambda_n)$. Then the eigenvectors of $R_{pijk}$ can be written as $\{w^{kp}= v^k \wedge v^p \mid  k,p\in \{1,2...n \}, k<p \}$, with eigenvalues $\lambda_k\lambda_p$ for $w^{kp}$. All the eigenvectors are decomposable $2$-forms, therefore positive sectional curvature implies positive-definite curvature.
\end{proof}

\begin{cor}
A sectionally positive curvature tensor that is not positive-definite requires an Euclidean space of dimension at least $n+2$ for the existence of an isometric immersion.
\end{cor}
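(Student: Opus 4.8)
The plan is to read this off directly from Proposition 4.1 and Lemma 4.2 by contraposition, after first disposing of the Euclidean ambient dimensions $m \leq n$. An $n$-manifold admits no immersion in $\mathbb{R}^m$ when $m<n$, since the differential of an immersion must be injective; and it admits an immersion in $\mathbb{R}^n$ only when it is flat. A sectionally positive tensor has some strictly positive sectional curvature, so it is in particular nonzero, and hence the manifold realizing it is not flat. Thus the only remaining dimension to exclude is $m=n+1$.

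So I would argue by contradiction: suppose an $n$-manifold $M$ whose Riemannian curvature at some point $q$ equals the given sectionally positive, non-positive-definite tensor $R_{pijk}$ admits a local isometric immersion in $\mathbb{R}^{n+1}$. By the forward implication of Proposition 4.1, there is then a symmetric tensor $\Pi_{ab}$ at $q$ with $R_{pijk}=\Pi_{pj}\Pi_{ik}-\Pi_{pk}\Pi_{ij}$. But Lemma 4.2 asserts exactly that a sectionally positive tensor admitting such a $\Pi$-factorization is positive-definite as an operator on the space of $2$-forms, contradicting the hypothesis. Hence no immersion in $\mathbb{R}^{n+1}$ can exist, and together with the preceding paragraph this shows the minimal dimension of a Euclidean ambient space is at least $n+2$.

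There is no real obstacle here: the statement is essentially the contrapositive of Lemma 4.2 routed through Proposition 4.1. The only points that deserve an explicit sentence are (i) fixing the logical direction — ``admits a $\Pi$-factorization'' $\Rightarrow$ ``positive-definite'', hence ``not positive-definite'' $\Rightarrow$ ``no $\Pi$-factorization'' $\Rightarrow$ ``no codimension-$1$ immersion'' — and (ii) justifying the conclusion ``at least $n+2$'' rather than merely ``not $n+1$'', which requires ruling out $m\leq n$ as above. Optionally I would close with a remark that codimension $2$ does suffice to realize such tensors, but that goes beyond what is being claimed.
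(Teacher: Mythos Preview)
Your proof is correct and follows essentially the same route as the paper: assume a codimension-$1$ immersion exists, invoke Proposition~4.1 to obtain a $\Pi$-factorization, then apply Lemma~4.2 to force positive-definiteness and reach a contradiction. Your extra paragraph disposing of ambient dimensions $m\leq n$ is a bit of tidiness the paper omits (it only excludes $m=n+1$), but the core argument is identical.
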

\begin{proof}
Assume dimension  $n+1$ is sufficient for an isometric immersion. Then, according to Proposition 4.1, there exists a symmetric tensor $\Pi_{ab}$ satisfying $\Pi_{pj}\Pi_{ik}-\Pi_{pk}\Pi_{ij}=R_{pijk}$. According to Lemma 4.2, a sectionally positive curvature tensor admiting such a form is positive definite, contradicting our hypothesis. This completes the proof.
\end{proof}

\noindent {\bf Decomposition of the curvature tensor}\\
Given a curvature tensor $R_{pijk}$ and a fixed metric tensor $g_{ab}$, we recall and make use of the following standard definitions:\\\\
The Ricci tensor as $R_{ij} = {R^k}_{ikj}$, and the scalar curvature as $R={R^a}_a$.\\
The scalar part as $S_{pijk}=\frac{R}{n(n-1)} (g_{pj}g_{ik}-g_{pk}g_{ij})$. \\
The traceless Ricci tensor as $S_{ab}=R_{ab}-\frac{R}{n}g_{ab}$. \\
The semi-traceless part as $E_{abcd}= \frac{1}{n-2}(g_{ac}S_{bd}+S_{ac}g_{bd}-S_{ad}g_{bc}-g_{ad}S_{bc})$.
The Schouten tensor as $P_{ab} = \frac{1}{n-2}(R_{ab}-\frac{R}{2(n-1)}g_{ab}) $.\\
The Weyl tensor, or fully-traceless part as \\$C_{abcd}= R_{abcd}-(E_{abcd}+S_{pijk})=R_{abcd}- (g_{ac}P_{bd}+P_{ac}g_{bd}-P_{ad}g_{bc}-g_{ad}P_{bc}) $.\\

\noindent These formulas are usually derived with respect to the Riemann tensor as base-component, but they can be applied to any $(4,0)$ symmetric operator on the space of $2$-forms. Let ${R_{ab}}^{cd}$ be a positive definite Riemann tensor, taken as an operator on the space of $2$-forms. We define ${\mathbb{R}^{*}_{ab}}^{cd} = ln({R_{ab}}^{cd})$. Using this, we can construct a new set of operators from ${\mathbb{R}^{*}_{ab}}^{cd}$, analogous to those defined from ${R_{ab}}^{cd}$. \mbox {For example, $W^{*}_{abcd}$ as the Weyl-component of $R^{*}_{abcd}$.}

\begin{thm}
Let ${R_{abcd}}$ be a positive-definite Riemann tensor and $n\geq 3$. Then $R_{pijk}$ admits a writing of the form $R_{pijk}=\Pi_{pj}\Pi_{ik}-\Pi_{pk}\Pi_{ij}$, for a symmetric $\Pi_{pk}$, if and only if the Weyl-component of ${\mathbb{R}^{*}_{ab}}^{cd} = ln({R_{ab}}^{cd})$ is $0$. Furthermore, the (only) resulting solutions are $\Pi_{ab}= \pm e^{P^{*}_{ab}}$, where $P^{*}_{ab}$ is the Schouten component of  ${\mathbb{R}^{*}_{ab}}^{cd}$.
\end{thm}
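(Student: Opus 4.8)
\noindent The plan is to translate everything into the language of self-adjoint operators on $\Lambda^2 T_pM$ at a fixed point, where both conditions become statements about a common diagonalization. Fix an orthonormal frame, so $g_{ab}=\delta_{ab}$. The first ingredient, essentially Lemma~4.2, is that if a symmetric $\Pi_{ab}$ has eigenvalues $\lambda_1,\dots,\lambda_n$ with orthonormal eigenvectors $e_1,\dots,e_n$, then the operator on $\Lambda^2$ attached to $T_{pijk}:=\Pi_{pj}\Pi_{ik}-\Pi_{pk}\Pi_{ij}$ is diagonal in the basis $\{e_a\wedge e_b\}_{a<b}$ with eigenvalue $\lambda_a\lambda_b$ on $e_a\wedge e_b$. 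The companion ingredient is the analogous direct computation that for a symmetric $A_{ab}$ with eigenvalues $\nu_a$ in that same frame, the Kulkarni--Nomizu combination $g_{pj}A_{ik}+A_{pj}g_{ik}-g_{pk}A_{ij}-A_{pk}g_{ij}$ --- which is exactly the non-Weyl part of an algebraic curvature tensor whose Schouten tensor is $A$ --- is diagonal on $\Lambda^2$ in the basis $\{e_a\wedge e_b\}$ with eigenvalue $\nu_a+\nu_b$, and that the Schouten component of this combination is $A$ itself (here $n\geq 3$ enters, through the $\tfrac{1}{n-2}$ in the definition). Since an algebraic curvature tensor is determined by its associated self-adjoint operator on $\Lambda^2$, any identity between curvature tensors may be checked at the level of these operators.

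With this dictionary in place, the forward direction is short. Suppose $R_{pijk}=\Pi_{pj}\Pi_{ik}-\Pi_{pk}\Pi_{ij}$ with $R$ positive-definite. Then all products $\lambda_a\lambda_b$ ($a\neq b$) are positive, and because $n\geq 3$ this forces every $\lambda_a$ to be nonzero and all of the same sign; replacing $\Pi$ by $-\Pi$ (which leaves $T$ unchanged) we may take $\Pi$ positive-definite and set $\mu_a=\ln\lambda_a$. Comparing eigenvalues, the operator $\mathbb R^{*}=\ln \mathcal R$ is diagonal on $\Lambda^2$ with eigenvalue $\mu_a+\mu_b$ on $e_a\wedge e_b$, so by the companion computation it equals the Kulkarni--Nomizu combination built from $\ln\Pi$. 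In particular $\mathbb R^{*}$ is a genuine curvature tensor with vanishing Weyl component, its Schouten component is $P^{*}=\ln\Pi$, and therefore $\Pi=e^{P^{*}}$; the original solution was $\pm e^{P^{*}}$.

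For the converse, assume the Weyl component of $\mathbb R^{*}=\ln\mathcal R$ vanishes. I would first argue (see the next paragraph) that this forces $\mathbb R^{*}$ to equal the Kulkarni--Nomizu combination $g_{pj}P^{*}_{ik}+P^{*}_{pj}g_{ik}-g_{pk}P^{*}_{ij}-P^{*}_{pk}g_{ij}$ with $P^{*}$ its Schouten component. Diagonalize $P^{*}$, with eigenvalues $\mu_a$ in an orthonormal frame; by the companion computation $\mathbb R^{*}$ is then diagonal on $\Lambda^2$ with eigenvalue $\mu_a+\mu_b$, so $\mathcal R=\exp\mathbb R^{*}$ is diagonal with eigenvalue $e^{\mu_a}e^{\mu_b}$. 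This is precisely the operator attached to $T_{pijk}=\Pi_{pj}\Pi_{ik}-\Pi_{pk}\Pi_{ij}$ for $\Pi:=e^{P^{*}}$, whence $R=T$. Uniqueness up to sign follows from the forward computation together with the injectivity of the map $A\mapsto g_{pj}A_{ik}+A_{pj}g_{ik}-g_{pk}A_{ij}-A_{pk}g_{ij}$ on symmetric $2$-tensors, valid for $n\geq 3$ (if its image vanishes then $\nu_a+\nu_b=0$ for all $a\neq b$, forcing every $\nu_a=0$): for positive-definite $\Pi$ the operator $\ln\mathcal R$ equals the combination built from $\ln\Pi$, so $\ln\Pi$, hence $\Pi$ up to sign, is determined.

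The step I expect to cost the most care is the claim used in the converse: that the vanishing of the \emph{Weyl component of} $\mathbb R^{*}$, computed by the trace formulas, really does pin $\mathbb R^{*}$ down to a Kulkarni--Nomizu product. The subtlety is that $\mathbb R^{*}=\ln\mathcal R$ is a priori only a self-adjoint operator on $\Lambda^2$ and need not satisfy the first Bianchi identity, so it may carry a nonzero component in the $\Lambda^4$-summand of the $O(n)$-decomposition of self-adjoint operators on $\Lambda^2$ (Weyl $\oplus$ semi-traceless $\oplus$ scalar $\oplus$ $\Lambda^4$). That $\Lambda^4$-component is trace-free, so it contributes nothing to the Ricci and scalar traces and hence nothing to $P^{*}$; consequently the quantity the paper calls the Weyl component of $\mathbb R^{*}$ is in fact the sum of the true Weyl part and this Bianchi part, and its vanishing kills both --- which is exactly what is needed to conclude that $\mathbb R^{*}$ is the Kulkarni--Nomizu combination built from $P^{*}$. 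I would make this rigorous by writing out the $O(n)$-isotypic decomposition and tracking where the Ricci, scalar, Schouten and Weyl formulas land. Everything else --- the eigenvalue bookkeeping, the two uses of $n\geq 3$, and the functional calculus of $\ln$ and $\exp$ on positive-definite operators --- is routine.
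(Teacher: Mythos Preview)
Your proof is correct and follows the same diagonalization and eigenvalue-matching strategy as the paper: both arguments pick an orthonormal eigenbasis for $\Pi$ (respectively $P^{*}$), observe that the induced operator on $\Lambda^2$ has eigenvalues $\lambda_a\lambda_b$ (respectively $\mu_a+\mu_b$), and pass between the multiplicative and additive pictures via $\exp$/$\ln$. Your discussion of the possible $\Lambda^4$ (Bianchi) component of $\mathbb R^{*}$ is a refinement the paper leaves implicit --- it simply defines $C^{*}$ as $R^{*}$ minus its scalar and semi-traceless parts and treats $C^{*}=0$ as equivalent to $R^{*}$ being the Kulkarni--Nomizu product built from $P^{*}$; your analysis supplies the justification that this definition absorbs the $\Lambda^4$ piece.
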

\begin{proof}
Assume ${R_{abcd}}$ can be written as $\Pi_{pj}\Pi_{ik}-\Pi_{pk}\Pi_{ij}$. Select a basis $(v^1,v^2,....v^n)$ that diagonalizes $\Pi_{pk}$ with eigenvalues  $(\lambda_1, \lambda_2,...,\lambda_n)$. The eigenvectors of $R_{pijk}$ can be written as $\{w^{kp}= v^k \wedge v^p \mid  k,p\in \{1,2...n \}, k<p \}$, with eigenvalues $\lambda_k\lambda_p$ for $w^{kp}$. Since the eigen-values of $R_{pijk}$ are all positive and take the form $\lambda_k\lambda_p$, then the eigen-values of any $\Pi_{ij}$ are either all positive or all negative. If they are all positive, then $\Pi_{ik}$ is uniquely determined from $R_{pikj}$ as $e^{P^{*}_{ab}}$, where $P^{*}_{ab}$ is the Schouten component of  ${R^{*}_{ab}}^{cd}$. The eigenvalues of  ${R^{*}_{ab}}^{cd}$ take the form $ln(\lambda_{k}\lambda_{p})=ln(\lambda_{k})+ln(\lambda_{p})$ for eigenvectors $v^k \wedge v^p$, and the eigenvalues of  $P^{*}_{ab}$ are $ln(\lambda_{k})$ for eigenvectors $v^k$. We recover $\Pi_{ab}$ as $e^{P^{*}_{ab}}$. If they are all negative, then multiply  $\Pi_{pj}$ by a factor of $-1$ to relate to the unique positive solution. ${\mathbb{R}^{*}_{ijkl}}$ is equal to $P^{*}_{ik}g_{jl}+g_{ik}P^{*}_{jl}-P^{*}_{il}g_{jk}+g_{il}P^{*}_{jk}$, thus having vanishing Weyl component.

Conversely, assume the Weyl component of  $R^{*}_{pikj}$ vanishes. \\Then $R^{*}_{pikj}=P^{*}_{ik}g_{jl}+g_{ik}P^{*}_{jl}-P^{*}_{il}g_{jk}-g_{il}P^{*}_{jk}$, and ${R_{pi}}^{kj}=e^{({R_{pi}^{*}}^{kj})}$ satisfies\\  $R_{pijk}=\Pi_{pj}\Pi_{ik}-\Pi_{pk}\Pi_{ij}$, for  \iffalse with \fi   $\Pi_{ab}= e^{P^{*}_{ab}}$. This can be checked by using a basis that diagonalizes $P^{*}_{ab}$:

Let $(v^1,v^2,....v^n)$ be such a basis, with eigen-values $(ln(\lambda_1), ln(\lambda_2),....ln(\lambda_n))$. The eigen-vectors of $R^{*}_{pikj}=P^{*}_{ik}g_{jl}+g_{ik}P^{*}_{jl}-P^{*}_{il}g_{jk}+g_{il}P^{*}_{jk}$ take the form $w^{kp}= v^k \wedge v^p$ with eigen-values $ln(\lambda_{k}\lambda_{p})=ln(\lambda_{k})+ln(\lambda_{p})$. We check that ${R_{pi}}^{kj}=e^{({R_{pi}^{*}}^{kj})}$ satisfies  $R_{pijk}=\Pi_{pj}\Pi_{ik}-\Pi_{pk}\Pi_{ij}$, for  \iffalse for \fi   $\Pi_{ab}= e^{P^{*}_{ab}}$, by verifying the two sides have the same eigen-vectors with the same corresponding eigen-values.

Due to the previous sections, $\pm \Pi_{ab}$ are the only solutions.
\end{proof}

The uniqueness of $\Pi_{ab}$ implies that for $n\geq 3$ and positive sectional curvature an isometric immersion, if it exists, it is rigid, \cite[page 4]{LW}.

Attempting to extend the procedure to arbitrary curvature runs into problems, as the logarithm function is not uniquely defined on negative-valued operators. If $n\geq 3$ and $R_{abcd}$ is of rank greater than one, then it is still true that $R_{abcd}= \Pi_{pj}\Pi_{ik}-\Pi_{pk}\Pi_{ij}$ has either $0$ or $2$ solutions, with $\Pi_1 = - \Pi_2$. In the case of $n=2$, there are an infinite number of solutions. They satisfy $det(\Pi)=R$. Negative curvature operators $R_{abcd}$ for $n\geq 3$ do not have (real) solutions, as they take the form $i\Pi_{pj}$, with $\Pi_{pj}$ being a solution for the positive operator $-R_{abcd}$. Although manifolds of negative curvature and dimension greater than $2$ do not admit local isometric immersions in $\mathbb{R}^{n+1}$, they may admit immersions in a pseudo-Euclidean space of signature $(n,1)$ (see end of Section 2).

\begin{thm}
For $n\geq 3$, a Riemannian $n$-manifold $M$ of positive sectional curvature admits a local isometric immersion in $\mathbb{R}^{n+1}$ if and only if it is of positive curvature operator and the following conditions are satisfied: $C^{*}_{abcd}=0$, and $e^{P^{*}}_{a[b;c]}=0$, where $C^{*}_{abcd}$ and $P^{*}_{ab}$ are the Weyl and Schouten tensors of ${\mathbb{R}^{*}_{ab}}^{cd}=ln({R_{ab}}^{cd})$.
\end{thm}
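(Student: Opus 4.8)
\emph{Proof proposal.} The plan is to combine the fundamental theorem of hypersurfaces (Theorem~3.2) with the algebraic reduction of Theorem~4.4, and then observe that the Codazzi equation transfers verbatim from $\Pi_{ab}$ to $e^{P^{*}}_{ab}$. By Theorem~3.2, $M$ admits a local isometric immersion in $\mathbb{R}^{n+1}$ if and only if there exists locally a symmetric tensor $\Pi_{ab}$ satisfying the Gauss equation $R_{abcd}=\Pi_{ac}\Pi_{bd}-\Pi_{ad}\Pi_{bc}$ and the Codazzi equation $\Pi_{a[b;c]}=0$. So the whole statement reduces to analysing when such a $\Pi_{ab}$ exists, which I would do in two stages: first the pointwise (algebraic) Gauss equation, then the Codazzi equation.

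For the Gauss equation, suppose first that an immersion exists, so some symmetric $\Pi_{ab}$ solves $R_{abcd}=\Pi_{ac}\Pi_{bd}-\Pi_{ad}\Pi_{bc}$. Since $R_{abcd}$ has positive sectional curvature, Lemma~4.2 forces $R_{abcd}$ to be positive-definite as an operator on $2$-forms; this is precisely the ``positive curvature operator'' necessary condition (equivalently, by Corollary~4.3, a sectionally positive but not positive-definite tensor would require codimension at least $2$). Conversely, assume from now on that $R_{abcd}$ is positive-definite; then ${\mathbb{R}^{*}_{ab}}^{cd}=\ln({\mathbb{R}_{ab}}^{cd})$ is a well-defined symmetric $(4,0)$ operator depending smoothly on $R$, and Theorem~4.4 tells us that the Gauss equation is solvable if and only if its Weyl component $C^{*}_{abcd}$ vanishes, in which case the only solutions are $\Pi_{ab}=\pm e^{P^{*}_{ab}}$, with $P^{*}_{ab}$ the Schouten component of ${\mathbb{R}^{*}_{ab}}^{cd}$. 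Here $e^{P^{*}_{ab}}$ is a genuine smooth tensor field wherever $R$ is smooth and positive-definite, so no regularity is lost in passing from the pointwise statement of Theorem~4.4 to a local tensor field.

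It then remains to feed the essentially unique $\Pi_{ab}=\pm e^{P^{*}_{ab}}$ into the Codazzi equation. On a connected domain the overall sign is a fixed constant $\varepsilon=\pm 1$, so $\Pi_{a[b;c]}=\varepsilon\,(e^{P^{*}})_{a[b;c]}$ and hence $\Pi_{a[b;c]}=0$ if and only if $e^{P^{*}}_{a[b;c]}=0$. Assembling the two directions: if an immersion exists, then $R_{abcd}$ is a positive curvature operator, $C^{*}_{abcd}=0$ by Theorem~4.4, and the unique admissible $\Pi$ satisfies Codazzi, i.e.\ $e^{P^{*}}_{a[b;c]}=0$; conversely, if $R_{abcd}$ is a positive curvature operator with $C^{*}_{abcd}=0$, then $\Pi_{ab}=e^{P^{*}_{ab}}$ solves the Gauss equation by Theorem~4.4, and the hypothesis $e^{P^{*}}_{a[b;c]}=0$ is exactly its Codazzi equation, so Theorem~3.2 yields the local immersion. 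This closes the equivalence.

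The only genuinely non-formal point — the expected main obstacle — is justifying that the operator logarithm $\ln({\mathbb{R}_{ab}}^{cd})$, and therefore $P^{*}_{ab}$ and $C^{*}_{abcd}$, are well-defined and \emph{smooth as tensor fields} rather than merely pointwise. This requires positive-definiteness of the curvature operator at every point, which is why that hypothesis must appear in the statement, and it also confines the argument to the open subdomain on which $R$ remains positive-definite; on that subdomain smoothness follows since $\ln$ is analytic on positive operators and the Weyl/Schouten decomposition is algebraic in $R$ and $g$. Everything else is a bookkeeping combination of Theorems~3.2 and~4.4 with Lemma~4.2.
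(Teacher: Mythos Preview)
Your proposal is correct and follows essentially the same approach as the paper: combine Theorem~3.2, Lemma~4.2, and Theorem~4.4 so that the Gauss equation becomes $C^{*}_{abcd}=0$ with $\Pi_{ab}=\pm e^{P^{*}_{ab}}$, and the Codazzi equation becomes $e^{P^{*}}_{a[b;c]}=0$. Your added remarks on smoothness of the operator logarithm and on the constancy of the sign $\varepsilon$ on connected domains are sound refinements that the paper leaves implicit.
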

\begin{proof} Combine the fundamental theorem of hypersurfaces (Theorem 3.2) with Lemma 4.2 and Theorem 4.4:
The fundamental theorem of hypersurfaces states that for $M$ to admit an isometric immersion in $\mathbb{R}^{n+1}$ is equivalent to the existence of $\Pi$ satisfying the Gauss and Codazzi equations.  Lemma 4.2 states that manifolds of sectionally positive curvature satisfying Gauss equation have positive curvature operators. Theorem 4.4 (applied for manifolds of positive curvature)  allows us to rewrite the Gauss and Codazzi equations in the forms stated above: the existence of  $\Pi$ satisfying the Gauss equation becomes $C^{*}_{abcd}$ = 0, with $\Pi_{ab}$ uniquely determined as $\pm e^{P^{*}}_{ab}$.
\end{proof}

For the case of $n=3$, this recovers \cite[Theorem 7]{LW}.
\begin{lem}
For $n>3$ and $M$ a Riemannian manifold of positive sectional curvature, $C^{*}_{abcd} = 0$ implies \hspace*{0.0ex}  $e^{P^{*}}_{a[b;c]}=0$. That is, for a manifold of positive sectional curvature and $n>3$, the existence of $\Pi_{ab}$ satisfying the Gauss equation implies $\Pi_{ab}$ also satisfies the Codazzi equation.
\end{lem}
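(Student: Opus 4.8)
The plan is to reduce the statement, using Theorem 4.4, to a Bianchi-theoretic fact about a single symmetric tensor. Since $C^{*}_{abcd}=0$ and (for $\ln\mathbb{R}$ to be defined) the curvature operator is positive definite, Theorem 4.4 produces a positive-definite symmetric tensor $\Pi_{ab}=e^{P^{*}_{ab}}$, of full rank $n$, with $R_{pijk}=\Pi_{pj}\Pi_{ik}-\Pi_{pk}\Pi_{ij}$; that is, the Gauss equation holds. The Codazzi condition $e^{P^{*}}_{a[b;c]}=0$ is unchanged under $\Pi\mapsto-\Pi$, so it suffices to prove: \emph{if $R_{pijk}$ is the Riemann tensor of $(M,g)$, $R_{pijk}=\Pi_{pj}\Pi_{ik}-\Pi_{pk}\Pi_{ij}$ for a symmetric positive-definite $\Pi$, and $n>3$, then $\Pi$ is a Codazzi tensor.} This cannot hold for $n=3$ (cf. the $3$-manifold discussion in Section 3), so the dimension hypothesis must enter essentially, exactly as $n\geq 4$ enters the Weyl--Schouten theorem.

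The only additional input is the second Bianchi identity $\nabla_{[e}R_{ab]cd}=0$, which $R$ satisfies because it is a genuine curvature tensor; by contrast $\mathbb{R}^{*}=\ln\mathbb{R}$ satisfies no such identity, which is why one cannot run the Weyl--Schouten divergence argument on $\mathbb{R}^{*}$ directly and must instead route through $R$. Set $T_{abc}:=\nabla_{a}\Pi_{bc}-\nabla_{b}\Pi_{ac}$; it is antisymmetric in $a,b$ and obeys $T_{abc}+T_{bca}+T_{cab}=0$, and $\Pi$ is Codazzi precisely when $T\equiv 0$. Substituting $R_{pijk}=\Pi_{pj}\Pi_{ik}-\Pi_{pk}\Pi_{ij}$ into $\nabla_{[e}R_{ab]cd}=0$ and collecting terms rewrites it as a homogeneous linear system in $T$ with coefficients built from $\Pi$ and $g$. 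I would analyse this system in a local orthonormal frame $(v^{1},\dots,v^{n})$ diagonalizing $\Pi$ with positive eigenvalues $\lambda_{1},\dots,\lambda_{n}$; such a smooth frame exists on the open set where the $\lambda_{i}$ are pairwise distinct. There the componentwise Bianchi relations split into two families: choosing four distinct slot-indices $i,j,k,l$ forces $\lambda_{l}\,T_{ijk}=0$, hence $T_{ijk}=0$ whenever $i,j,k$ are distinct; and choosing three distinct indices gives $\lambda_{k}\,T_{ijj}+\lambda_{j}\,T_{ikk}=0$, which, played off against a fourth distinct index, yields $T_{ijj}=0$ for all $i\neq j$. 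Since the symmetries of $T$ leave only the components $T_{ijk}$ (distinct indices) and $T_{ijj}$ possibly nonzero, this gives $T\equiv 0$ on the distinct-eigenvalue locus. Both the positivity of $\Pi$ (to divide by the $\lambda_{i}$) and the availability of a fourth index ($n>3$) are used here; for $n=3$ the two families contain too few relations, matching the counterexample.

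It then remains to handle points where $\Pi$ has repeated eigenvalues: on the interior of that set one adapts the frame to the locally constant eigenspace decomposition of $\Pi$ and reruns the componentwise argument (the coincident-eigenvalue relations simply become $0=0$), while on the boundary $T\equiv 0$ follows from continuity of $T$. Hence $T\equiv 0$ everywhere, so $\Pi=e^{P^{*}}$ satisfies $e^{P^{*}}_{a[b;c]}=0$. The main obstacle I anticipate is the bookkeeping in the componentwise Bianchi computation --- pinning down exactly which index patterns produce the two decoupled families and verifying that, for $n>3$, together they annihilate every component of $T$ --- together with the care needed at the eigenvalue-multiplicity locus.
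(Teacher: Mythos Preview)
Your skeleton matches the paper's exactly: invoke Theorem~4.4 to obtain an invertible $\Pi$ with $R_{abcd}=\Pi_{ac}\Pi_{bd}-\Pi_{ad}\Pi_{bc}$, then feed this into the second Bianchi identity and show the resulting homogeneous linear system forces $Y_{abc}:=\Pi_{a[b;c]}=0$. The difference is only in how that linear system is solved. The paper stays coordinate-free: writing the cyclic Bianchi sum as $T_{abcde}=4(\Pi_{d[b}Y_{a]ce}+\Pi_{c[b}Y_{a]ed}+\Pi_{e[b}Y_{a]dc})$ and contracting successively with $(\Pi^{-1})^{ac}$ and $(\Pi^{-1})^{bd}$ produces the single identity $(n-2)(n-3)\,Y_{bde}=0$, which makes the role of the hypothesis $n>3$ completely transparent and needs no frame at all. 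Your componentwise argument in a $\Pi$-diagonal orthonormal basis is also correct and reaches the same conclusion, but your eigenvalue-multiplicity discussion is a red herring: the two families of relations you extract ($\lambda_{l}\,T_{ijk}=0$ and $\lambda_{k}\,T_{ijj}+\lambda_{j}\,T_{ikk}=0$) depend only on distinctness of the \emph{indices} $i,j,k,l$, never on distinctness of the eigenvalues, and the only divisions performed are by the $\lambda_i>0$ themselves. So the pointwise algebra goes through unchanged at points with repeated eigenvalues, and no smooth-frame/open-dense/continuity patching is needed. The paper's $\Pi^{-1}$-contraction buys you that cleanup for free, together with the explicit $(n-2)(n-3)$ factor; your route buys a more hands-on picture of which Bianchi components kill which pieces of $T$.
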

\begin{proof}
From Theorem 4.4 we know that  $C^{*}_{abcd} = 0$  implies the existence of a tensor $\Pi_{ab}$ such that $R_{abcd}=\Pi_{ac}\Pi_{bd}-\Pi_{ad}\Pi_{bc}$, and $\Pi_{ab}= e^{P^{*}_{ab}}$. Note that $\Pi$ is invertible.  Our aim is to prove that $\Pi_{a[b;c]} = 0$. \\The second Bianchi identity says: $R_{abcd;e} + R_{abde;c} + R_{abec;d}=0$; \\ Denote $T_{abcde}  = R_{abcd;e} + R_{abde;c} + R_{abec;d}$ and $Y_{abc}= \Pi_{a[b;c]}$.\\ Substituting $R_{abcd}=\Pi_{ac}\Pi_{bd}-\Pi_{ad}\Pi_{bc}$ in $T_{abcde}$ and grouping, we obtain:\\
$T_{abcde}=4(\Pi_{d\lbrack b}Y_{a\rbrack ce} + \Pi_{c\lbrack b}Y_{a\rbrack ed} + \Pi_{e\lbrack b}Y_{a\rbrack dc})$.\\
Let $T_{bde} = T_{abcde}(\Pi^{-1})^{ac}$ and $T_e= T_{bde}(\Pi^{-1})^{bd}$. We have:\\
$T_{bde} = T_{abcde}(\Pi^{-1})^{ac} = (n-3)Y_{bde} + 2Y_{ac\lbrack e}\Pi_{d\rbrack b}(\Pi^{-1})^{ac}$\\
$T_e= T_{bde}(\Pi^{-1})^{bd} = 2(n-2)Y_{ace} (\Pi^{-1})^{ac}$.\\
$(n-2)(n-3)Y_{bde}=(n-2)T_{bde}-T_{\lbrack e}\Pi_{d\rbrack b}$.\\
$(n-2)(n-3)\Pi_{b[d;e]}=(n-2)T_{bde}-T_{\lbrack e}\Pi_{d\rbrack b}$.\\
Since $T_{abcde}$, $T_{bde}$ and $T_e$ are $0$, we know $Y_{bde}=\Pi_{a[b;c]} = 0$ for $n>3$, thus completing our proof.
\end{proof}

\section{ Similarities to the Weyl-Schouten theorem and conformally flat manifolds}
\noindent Combining Theorem 4.5 and Lemma 4.6, we obtain the following theorem:
\begin{thm}
For $n\geq 3$, a Riemannian $n$-manifold $M$ of positive curvature operator admits a local isometric immersion in $\mathbb{R}^{n+1}$ if and only if,

when $n=3$, $e^{P^{*}}_{a[b;c]}=0$, and

when $n>3$, $C^{*}_{abcd}=0$,

\noindent where $C^{*}_{abcd}$ and $P^{*}_{ab}$ are the Weyl and Schouten tensors of ${\mathbb{R}^{*}_{ab}}^{cd}=ln({\mathbb{R}^{*}_{ab}}^{cd})$.
\end{thm}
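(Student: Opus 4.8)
The plan is to obtain Theorem 5.1 as an immediate corollary of Theorem 4.5 and Lemma 4.6, splitting on dimension; the only external ingredient needed is the classical fact that algebraic curvature tensors in dimension three have vanishing Weyl part.

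First I would check that the hypothesis ``positive curvature operator'' suffices to invoke Theorem 4.5. A positive curvature operator is in particular positive on decomposable unit $2$-forms $v\wedge w$, and its value there is the sectional curvature of the plane spanned by $v,w$; hence positive curvature operator implies positive sectional curvature, and it also guarantees that the logarithm ${\mathbb{R}^{*}_{ab}}^{cd}=\ln({\mathbb{R}_{ab}}^{cd})$ — and therefore its Weyl component $C^{*}_{abcd}$ and Schouten component $P^{*}_{ab}$ — is well defined. Under this hypothesis the side condition ``is of positive curvature operator'' appearing in Theorem 4.5 is automatic, so Theorem 4.5 reduces to the assertion that $M$ admits a local isometric immersion in $\mathbb{R}^{n+1}$ if and only if $C^{*}_{abcd}=0$ and $e^{P^{*}}_{a[b;c]}=0$.

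It then remains to simplify this pair of conditions according to dimension. When $n=3$ the space of curvature-symmetric $(4,0)$ tensors coincides with $E_{abcd}+S_{pijk}$ (there is no Weyl summand in three dimensions), so $C^{*}_{abcd}\equiv 0$ identically and the criterion collapses to $e^{P^{*}}_{a[b;c]}=0$. When $n>3$, Lemma 4.6 states precisely that $C^{*}_{abcd}=0$ forces $e^{P^{*}}_{a[b;c]}=0$; since $C^{*}_{abcd}=0$ is in any case necessary, the conjunction of the two conditions is equivalent to the single condition $C^{*}_{abcd}=0$. Assembling the two cases yields the stated theorem.

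The argument is essentially bookkeeping and I expect no substantive obstacle beyond what Sections 3 and 4 already supply; the one point requiring care is to keep the two reductions straight — in $n=3$ it is the Weyl condition that is vacuous, whereas in $n>3$ it is the Weyl condition that implies the Codazzi-type condition, not conversely — together with confirming that ``positive curvature operator'' (rather than merely positive sectional curvature) is exactly the hypothesis under which $\ln({\mathbb{R}_{ab}}^{cd})$, and hence the whole statement, makes sense.
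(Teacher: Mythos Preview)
Your proposal is correct and follows essentially the same approach as the paper, which simply states that Theorem 5.1 is obtained by combining Theorem 4.5 and Lemma 4.6. You are merely more explicit than the paper in spelling out the $n=3$ case (via the vanishing of the Weyl part in three dimensions) and in checking that the hypothesis of positive curvature operator feeds correctly into Theorem 4.5.
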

\noindent Compare this to the Weyl-Schouten theorem:
\begin{thm}[Weyl-Schouten]
For $n\geq 3$, a Riemannian $n$-manifold is conformally flat if and only if,

when $n=3$, $P_{a[b;c]}=0$, and

when $n>3$, $C_{abcd}=0$,

\noindent where $C_{abcd}$ and $P_{ab}$ are the Weyl and Schouten tensors of $R_{abcd}$.
\end{thm}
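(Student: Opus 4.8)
The plan is to prove this classical statement through the conformal transformation behavior of the Weyl and Schouten tensors of $R_{abcd}$. Write $\tilde{g}_{ab} = e^{2\varphi}g_{ab}$ for a conformal change. The two facts I would start from are: the Weyl tensor in $(1,3)$-form is conformally invariant, $\tilde{C}^{a}{}_{bcd} = C^{a}{}_{bcd}$; and the Schouten tensor transforms as $\tilde{P}_{ab} = P_{ab} - \varphi_{;ab} + \varphi_{;a}\varphi_{;b} - \tfrac{1}{2} g^{cd}\varphi_{;c}\varphi_{;d}\,g_{ab}$. A flat metric has $R_{abcd}=0$, hence $C_{abcd}=0$, $P_{ab}=0$, and a fortiori $P_{a[b;c]}=0$. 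Necessity is then immediate: if $g$ is conformally flat, then when $n>3$ the invariance of $C$ forces $C_{abcd}=0$ for $g$; when $n=3$ the Weyl tensor vanishes identically, and one checks from the transformation law that the combination $P_{a[b;c]}$ (the Cotton tensor, up to normalization) transforms as a genuine conformal density, so it too must vanish.

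For sufficiency the central reduction is that $g$ is (locally) conformally flat if and only if one can solve the overdetermined PDE
$$\varphi_{;ab} - \varphi_{;a}\varphi_{;b} + \tfrac{1}{2} g^{cd}\varphi_{;c}\varphi_{;d}\,g_{ab} = P_{ab}$$
for a local function $\varphi$; indeed such a $\varphi$ makes $\tilde{P}_{ab}=0$, and combined with $\tilde{C}_{abcd}=0$ (conformal invariance when $n>3$, automatic when $n=3$) the decomposition $R_{abcd} = C_{abcd} + g_{ac}P_{bd}+P_{ac}g_{bd}-g_{ad}P_{bc}-P_{ad}g_{bc}$ shows $\tilde{R}_{abcd}=0$, so $\tilde{g}$ is flat. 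I would solve this PDE by introducing the one-form $\psi_a := \varphi_{;a}$ and rewriting it as the first-order total (Frobenius) system $\psi_{a;b} = \psi_a\psi_b - \tfrac12 g^{cd}\psi_c\psi_d\,g_{ab} + P_{ab}$ for $(\varphi,\psi_a)$. Its integrability condition is found by differentiating once more and imposing the Ricci identity $\psi_{a;bc}-\psi_{a;cb} = \psi_d R^{d}{}_{abc}$: after substituting the system back into itself to eliminate all second derivatives of $\varphi$, the obstruction collapses, via the first and second Bianchi identities and the definitions of $P$ and $C$, to precisely $2\,P_{a[b;c]}$, which is in turn a fixed nonzero multiple of $C_{abcd}{}^{;d}$. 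Hence the system is integrable exactly when $P_{a[b;c]}=0$; for $n>3$ the contracted second Bianchi identity shows this is implied by $C_{abcd}=0$, and for $n=3$ it is the stated hypothesis. The Frobenius theorem on a contractible neighbourhood then produces $\varphi$ (with $\varphi(p)$ and $\psi_a(p)$ freely prescribable at one point).

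The heart of the proof, and the step I expect to be the main obstacle, is that integrability computation: checking that the full curvature obstruction of the Schouten PDE is nothing more than the Cotton tensor. This requires carefully contracting the twice-differentiated equation against $R_{abcd}$, systematically clearing the $\varphi_{;ab}$-terms using the equation, and recognizing the surviving algebraic combination of $R_{abcd}$ and $P_{ab}$ as a Bianchi-contraction of the Weyl tensor. Everything else — the transformation laws, the final assembly $\tilde{R}=0 \Rightarrow$ flat, and the appeal to Frobenius — is routine. It is worth noting that this proof is structurally the exact mirror of the argument for Theorem 5.1: there the operator $R_{abcd}$ is replaced by ${\mathbb{R}^{*}_{ab}}^{cd}=\ln({\mathbb{R}_{ab}}^{cd})$, the role of the conformal factor $e^{2\varphi}$ is played by $\Pi_{ab}=\pm e^{P^{*}_{ab}}$, the Weyl-vanishing $C_{abcd}=0$ corresponds to $C^{*}_{abcd}=0$, and the Cotton-type condition $P_{a[b;c]}=0$ corresponds to $e^{P^{*}}_{a[b;c]}=0$ — which is precisely the parallelism advertised in the abstract.
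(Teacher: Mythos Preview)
The paper does not supply a proof of this statement: Theorem~5.2 is simply the classical Weyl--Schouten theorem, quoted without argument so that it can be set side by side with Theorem~5.1. Your proposal is the standard textbook proof (conformal transformation laws for $C$ and $P$, reduction to the Schouten PDE $\varphi_{;ab}-\varphi_{;a}\varphi_{;b}+\tfrac12|\nabla\varphi|^2 g_{ab}=P_{ab}$, and Frobenius integrability yielding the Cotton obstruction), and it is correct as outlined; the one place to be careful is the necessity argument in dimension $3$, where the Cotton tensor is conformally invariant precisely because its transformation law picks up a Weyl-tensor term that vanishes identically when $n=3$. Since the paper offers no proof of its own here, there is nothing to compare approaches against; your closing remark about the structural mirror with Theorem~5.1 is exactly the point the author is making by juxtaposing the two statements.
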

Note that when $n=2$, every Riemannian manifold is conformally flat. Analogously, when $n=2$, every Riemannian manifold admits a local isometric immersion in $\mathbb{R}^3$ (Cartan-Janet theorem).

\section{Cross-sections and new structures resulting from immersions}
The existence of immersions of an $n$-manifold $M$ in $\mathbb{R}^{n+k}$ allows for the study of new geometric structures. We can associate to $M$ a new notion of distance, the Euclidean distance of the  immersed points in $\mathbb{R}^{n+k}$. When a global immersion exists, this distance is smooth globally, unlike the intrinsic distance which is not smooth near its cut locus. Additionally we can study a new class of $p$-submanifolds of $M$, those that appear as the intersection of $M$'s immersion with various $(n+p)$-hyperplanes of $\mathbb{R}^{n+k}$.

For our particular approach when $k=1$, the $(n-1)$-manifolds that are the cross-sections of $M$ by $n$-planes are the equipotential lines of $h$. If two points have the same $h$, the Euclidean distance between them is the distance measured along the resulting flat metric $f_{ab}=g_{ab}-h_{;a}h_{;b}$. For any two points $a$ $b$, there exists a choice of $h$ such that $h(a)=h(b)$.

We shall calculate the curvature tensor of cross-section $(n-1)$-submanifolds, or the manifolds for which $h$ is constant, for a particular $h$.
Let $N$ be such a submanifold, and $R_N$ be its curvature tensor. We shall use $a,b,c,d\in \{1,2..n\}$ as to index over $TM$, and  $i,j,k,l\in \{1,2..n-1\}$ to index over $TN$. We know that
$$ (R_{N})_{ijkl} = (R_{M})_{ijkl} +(K_{ik}K_{jl}-K_{il}K_{jk})$$
where $K$ is the second fundamental form of $N$ as a submanifold on $M$,  $R_{M}$ is the curvature tensor of $M$.

 We may compute, $K_{ij}$ as the restriction of $n_{a;b}$ to $TN$, where $n_{a}$ is a field of unit co-vectors normal to $N$. Since $h$ is constant on $N$, $h_{;a}$ is already a field of normal vectors. We shall denote $P^{a}_{i}$ to be the linear map from $TN$ to $TM$ over $N$, ie. the differential of $N$'s immersion map.

 Let $n_{a}=\frac{h_{;a}}{(g^{mn}h_{;m}h_{;n})^{\frac{1}{2}}}$ be our field of unit normal co-vectors. This gives:\\
 $K_{ij}=n_{d;c}P^{d}_{i}P^{c}_{j}  = \frac{1}{(g^{mn}h_{;m}h_{;n})^{\frac{1}{2}}}h_{;dc}P^{d}_{i}P^{c}_{j}=\frac{(1-g^{mn}h_{;m}h_{;n})^{\frac{1}{2}}}{(g^{mn}h_{;m}h_{;n})^{\frac{1}{2}}}\Pi_{cd}P^{d}_{i}P^{c}_{j}\implies  $\\ $K_{ij}=\frac{(1-g^{mn}h_{;m}h_{;n})^{\frac{1}{2}}}{(g^{mn}h_{;m}h_{;n})^{\frac{1}{2}}}\Pi_{ij}$. The second fundamental form $K_{ij}$ is the \mbox{restriction} of  $\frac{(1-g^{mn}h_{;m}h_{;n})^{\frac{1}{2}}}{(g^{mn}h_{;m}h_{;n})^{\frac{1}{2}}}\Pi_{ab}$ to $TN$. Plugging this in the curvature equation:\\  $(R_{N})_{ijkl} = (R_{M})_{ijkl} +(K_{ik}K_{jl}-K_{il}K_{jk})$, we get: $(R_{N})_{ijkl}= \frac{1}{g^{mn}h_{;m}h_{;n}}(R_{M})_{ijkl}$.

The curvature tensor of the cross-section manifold, $(R_{N})_{ijkl}$, is simply the restriction/pullback of the curvature tensor of our ambient manifold, $(R_{M})_{abcd}$, with a scaling factor of  $\frac{1}{g^{mn}h_{;m}h_{;n}}$.

\section{Conclusion}

We have calculated the obstructions to the existence of local isometric immersions in $\mathbb{R}^{n+1}$ of $n$-manifolds having positive-sectional curvature. The resulting theorem has a structural similarity to Weyl-Schouten theorem, hinting at a relationship between the two classes of manifolds: conformally flat and locally-hypersurfaces of $\mathbb{R}^{n+1}$. Potential further areas of study are finding links that make this relationship explicit or derive it as a consequence of some more general principle, reductions to obstruction-form of immersion equations when codimension is higher than $1$, and immersions in various non-Euclidean spaces or with various structures (eq: immersions in pseudo-Euclidean spaces, complex spaces, immersions of K{\"a}hler manifolds, etc.)

\vspace*{0.3cm}
\footnotesize {\noindent Dan Fodor\\
Faculty of Mathematics\\
 Alexandru Ioan Cuza University\\
 700506, Ia\c si\\
 Romania\\
 Email: \texttt{\scriptsize{dan.fodor52@yahoo.com}}}

\end{document}